\theoremstyle{plain}
  \newtheorem{theorem}{\bf Theorem}[section]
  \newtheorem{proposition}[theorem]{\bf Proposition}
  \newtheorem{lemma}[theorem]{\bf Lemma}
\DeclareMathOperator{\codim}{codim}
\DeclareMathOperator{\loc}{loc}
\newcommand{\linearmapj}{L_j}
\newcommand{\linearfamily}{\mathbf{L}}
\newcommand{\nonlinearmapj}{B_j}
\newcommand{\id}{Id}
\begin{document}
\title{Stability of the Brascamp--Lieb constant and applications} \keywords{Brascamp--Lieb inequalities; Fourier restriction theorems; Kakeya estimates}
\subjclass[2000]{}
\author{Jonathan Bennett}
\author{Neal Bez}
\author{Taryn C. Flock}
\author{Sanghyuk Lee}
\thanks{This work was supported by the European Research Council [grant
number 307617] (Bennett, Flock), JSPS Kakenhi [grant numbers 26887008 and 16K13771] (Bez), NRF of Korea [grant number 2015R1A2A2A05000956] (Lee). \\This article appeared in the \textit{American Journal of Mathematics}, Volume 140, Issue 22, 2018, pages 543--569, \copyright 2018, Johns Hopkins University Press.}

\address{Jonathan Bennett and Taryn C. Flock: School of Mathematics, The Watson Building, University of Birmingham, Edgbaston,
Birmingham, B15 2TT, England.}
\email{J.Bennett@bham.ac.uk, T.C.Flock@bham.ac.uk}
\address{Neal Bez: Department of Mathematics, Graduate School of Science and Engineering,
Saitama University, Saitama 338-8570, Japan}
\email{nealbez@mail.saitama-u.ac.jp}
\address{Sanghyuk Lee: Department of Mathematical Sciences, Seoul National University, Seoul 151-747, Korea}
\email{shklee@snu.ac.kr}

\begin{abstract}
We prove that the best constant in the general Brascamp--Lieb inequality is a locally bounded function of the underlying linear transformations. As applications we deduce certain very general Fourier restriction, Kakeya-type, and nonlinear variants of the Brascamp--Lieb inequality which have arisen recently in harmonic analysis.
\end{abstract}

\maketitle
\section{Introduction}
The celebrated Brascamp--Lieb inequality, which simultaneously generalises many important multilinear inequalities in analysis, including the H\"older, Loomis--Whitney and Young convolution inequalities, takes the form
\begin{eqnarray}\label{BL}
\int_{H} \prod_{j=1}^m (f_j \circ \linearmapj )^{p_j}
\leq C \prod_{j=1}^m \bigg(\int_{H_j} f_j\bigg)^{p_j}.
\end{eqnarray}
Here $m$ denotes a positive integer, $H$ and $H_j$ denote euclidean spaces of finite dimensions $n$ and $n_j\leq n$ respectively, equipped with Lebesgue measure for each $1\leq j\leq m$. The maps $\linearmapj:H\to H_j$  are surjective linear transformations, and the exponents $0\leq p_j\leq 1$ are real numbers.
This inequality is often referred to as multilinear, since it is equivalent to
\begin{eqnarray}\label{BLeq}
\int_{H} \prod_{j=1}^m f_j \circ \linearmapj
\leq C \prod_{j=1}^m \|f_j\|_{L^{q_j}(H_j)}
\end{eqnarray}
where $q_j=p_j^{-1}$ for each $j$.

Following the notation introduced in \cite{BCCT1} we denote by $\mbox{BL}(\linearfamily,\mathbf{p})$ the smallest constant $C$ for which \eqref{BL} holds for all nonnegative input functions $f_j\in L^1(\mathbb{R}^{n_j})$, $1\leq j\leq m$. Here $\linearfamily$ and $\mathbf{p}$ denote the $m$-tuples $(\linearmapj)_{j=1}^m$ and $(p_j)_{j=1}^m$ respectively. We refer to  $(\linearfamily,\mathbf{p})$ as the \textit{Brascamp--Lieb datum}, and $\mbox{BL}(\linearfamily,\mathbf{p})$ as the \textit{Brascamp--Lieb constant}.
To avoid completely degenerate cases, where $\mbox{BL}(\linearfamily,\mathbf{p})$ is easily seen to be infinite, it is natural to restrict attention to data $(\linearfamily,\mathbf{p})$ for which
$$
\bigcap_{j=1}^m \ker{\linearmapj}=\{0\}.
$$
In \cite{L} Lieb proved that $\mbox{BL}(\linearfamily,\mathbf{p})$ is exhausted by centred gaussian inputs
$$
f_j(x)=\exp(-\pi\langle A_jx,x\rangle),
$$
for arbitrary positive-definite transformations $A_j:H_j\rightarrow H_j$, and thus
$$
\mbox{BL}(\linearfamily,\mathbf{p})=\sup \;\frac{\prod_{j=1}^m(\det A_j)^{p_j/2}}{\det\left(\sum_{j=1}^m p_j\linearmapj^*A_j\linearmapj\right)^{1/2}},
$$
where
the supremum is taken over all such $A_j$, $1\leq j\leq m$. While this considerably reduces the complexity of computing the Brascamp--Lieb constant for a given datum, it does not provide a transparent characterisation of the data for which it is finite. This problem was addressed in \cite{BCCT1} and \cite{BCCT2} (see also the forerunner \cite{CLL} in the rank one setting), where it was shown that $\mbox{BL}(\linearfamily,\mathbf{p})$ is finite if and only if the scaling condition
\begin{equation}\label{scaling}\sum_{j=1}^mp_jn_j=n
\end{equation}
and the dimension condition
\begin{equation}\label{char}
\dim(V)\leq\sum_{j=1}^mp_j\dim(\linearmapj V)
\end{equation}
hold for all subspaces $V\subseteq H$.

In this note we turn our attention to the \emph{stability} of the constant $ \mbox{BL}(\linearfamily,\mathbf{p})$ as a function of the linear maps $\linearfamily$, establishing the following basic result:
\begin{theorem}\label{mainthm}
Suppose that $(\linearfamily^0,\mathbf{p})$ is a Brascamp--Lieb datum for which $\emph{BL}(\linearfamily^0,\mathbf{p})<\infty$. Then there exists $\delta>0$ and a constant $C<\infty$ such that
$$\mbox{\emph{BL}}(\linearfamily,\mathbf{p})\leq C$$
whenever $\|\linearfamily-\linearfamily^0\|<\delta$.
\end{theorem}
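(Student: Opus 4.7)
The plan is to argue by induction on the ambient dimension $n = \dim H$, combining Lieb's gaussian reduction formula with the BCCT dimension-condition characterisation of finiteness. Since the scaling condition \eqref{scaling} involves only the fixed integers $n, n_j$, it is automatically stable under perturbations of $\linearfamily$, so all the work reduces to controlling \eqref{char}. The natural case split is according to whether $(\linearfamily^0,\mathbf{p})$ admits a \emph{critical subspace} $V_0 \subsetneq H$ with $0 < \dim V_0$ at which \eqref{char} holds with equality.

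\textbf{Case with a critical subspace.} When such $V_0$ exists, I would invoke the standard submultiplicativity
$$\mathrm{BL}(\linearfamily,\mathbf{p}) \le \mathrm{BL}(\linearfamily|_{V_0},\mathbf{p}) \cdot \mathrm{BL}(\linearfamily_{/V_0},\mathbf{p}),$$
valid for any subspace and any datum. When $V_0$ is critical for $\linearfamily^0$, the factorisation theorem of BCCT guarantees that both factor data have finite BL constants at $\linearfamily^0$. Since they live in dimensions strictly less than $n$, the inductive hypothesis supplies local uniform bounds on each factor in a neighborhood of $\linearfamily^0$, hence on their product. A technicality to handle here is that the codomain dimensions $\dim \linearmapj V_0$ may jump upwards under perturbation, changing the combinatorial shape of the factor data; this can be accommodated by working with image spaces or by a generic-position reduction on each stratum.

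\textbf{Case with no critical subspace.} Then \eqref{char} holds strictly for every proper nontrivial $V \subseteq H$. Combining the lower semi-continuity of rank in $\linearfamily$ with the compactness of the Grassmannian of $H$ and the finiteness of the stratification by the discrete invariant $V \mapsto (\dim \linearmapj V)_{j=1}^m$, this strictness is robust under small perturbations and uniform over $V$. To convert this qualitative statement into a uniform upper bound, I would argue by contradiction via Lieb's formula: given $\linearfamily^k \to \linearfamily^0$ with $\mathrm{BL}(\linearfamily^k,\mathbf{p}) \to \infty$, extract almost-maximising positive-definite $A_j^k$, normalise $\prod_j (\det A_j^k)^{p_j}=1$, and analyse the eigendirections of $\sum_j p_j (\linearmapj^k)^* A_j^k \linearmapj^k$ along which the determinant collapses. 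A recursive extraction should produce a subspace $V_\ast \subseteq H$ whose logarithmic rank data forces $\dim V_\ast > \sum_j p_j \dim(\linearmapj^0 V_\ast)$, contradicting the strict dimension condition at $\linearfamily^0$.

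The main obstacle is the no-critical-subspace case: the critical case reduces cleanly to lower dimensions once the BCCT submultiplicativity and factorisation are in hand, but translating the discrete strict dimension condition into a quantitative analytic estimate on the Lieb supremum, while simultaneously perturbing the $\linearmapj$'s themselves, demands a delicate simultaneous blow-up analysis of the eigenvalue scales of the $A_j^k$ and of the composite forms $\sum_j p_j (\linearmapj^k)^* A_j^k \linearmapj^k$. Correctly identifying the obstructing subspace $V_\ast$, and showing that its perturbed rank data survive the passage to the limit $\linearfamily^k \to \linearfamily^0$, is where the bulk of the work should lie.
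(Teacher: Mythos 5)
Your strategy---induction on $\dim H$, factoring through a critical subspace when one exists, and a separate compactness/contradiction argument for simple data---is genuinely different from the paper's, but it contains a gap, and it sits exactly where you claim things reduce "cleanly". The submultiplicativity $\mathrm{BL}(\mathbf{L},\mathbf{p})\le \mathrm{BL}(\mathbf{L}|_{V_0},\mathbf{p})\,\mathrm{BL}(\mathbf{L}_{/V_0},\mathbf{p})$ does hold for every subspace, but the restricted datum consists of the maps $L_j|_{V_0}:V_0\to L_jV_0$, and its scaling condition is $\sum_j p_j\dim(L_jV_0)=\dim V_0$ --- which is precisely the criticality of $V_0$ for $\mathbf{L}^0$. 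Rank is only lower semicontinuous, so an arbitrarily small perturbation of $\mathbf{L}$ can increase some $\dim(L_jV_0)$, after which $\sum_j p_j\dim(L_jV_0)>\dim V_0$ and $\mathrm{BL}(\mathbf{L}|_{V_0},\mathbf{p})=\infty$ by a dilation test. The submultiplicative bound is then vacuous for a dense set of perturbations, and there is no \emph{fixed} lower-dimensional datum to which an inductive hypothesis applies. Replacing $V_0$ by a perturbation-dependent critical subspace does not rescue the argument, because the perturbed datum may be simple and have no critical subspace at all (the paper notes that $F_S(\mathbf{p})$ is open, so the existence of critical subspaces is itself unstable). "Working with image spaces or a generic-position reduction" does not address this: the difficulty is not bookkeeping of dimensions but a genuine discontinuity of the factor constants. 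This is exactly why the paper declares the inductive route through critical subspaces "quite problematic" and avoids it. Your simple case, which you identify as the hard one, is in fact the stable regime (Valdimarsson already proves differentiability of the constant there), and in any event your blow-up extraction of the obstructing subspace $V_*$ is only a plan, with all the analytic content deferred.

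For contrast, the paper's proof is direct and uniform, with no induction on dimension and no critical subspaces: it first proves a uniform version of Lemma 5.1 of [BCCT1] by a compactness argument over orthonormal frames, producing for every basis $\mathbf{e}$ and every $\mathbf{L}$ near $\mathbf{L}^0$ index sets $I_j$ with $|I_j|=\dim H_j$, the combinatorial condition $\sum_j p_j|I_j\cap\{1,\dots,k\}|\le k$, and the quantitative nondegeneracy $|\bigwedge_{i\in I_j}L_je_i|\ge c$. It then applies Lieb's theorem to the Gaussian-localised constant, runs this lemma on the eigenbasis of $M+\mathrm{Id}$ with $M=\sum_jp_jL_j^*A_jL_j$, and closes the estimate with a Hadamard-type determinant bound and a telescoping product over the ordered eigenvalues. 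If you want to salvage your outline, you would need to replace the critical-subspace factorisation with some such mechanism that degrades continuously under perturbation.
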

Of course Theorem \ref{mainthm} tells us that for fixed $\mathbf{p}$, the finiteness set
$$
F(\mathbf{p}):=\{\linearfamily: \mbox{BL}(\linearfamily,\mathbf{p})<\infty\}
$$
is open, and that the function $\linearfamily\mapsto \mbox{BL}(\linearfamily,\mathbf{p})$ is locally bounded. We refer to the concurrent work of Bourgain and Demeter \cite{BD} for some interesting applications of this result in the setting of Weyl sums and Diophantine equations.

Under certain additional constraints on the kernels of the linear maps $\linearmapj^0 $, the conclusion of Theorem \ref{mainthm} may be seen quite directly. For instance, in the rank one case ($n_j=1$ for all $j$) this follows quickly via Barthe's characterisation of the extreme points of the Brascamp--Lieb polytope
$$
\Pi(\linearfamily):=\{\mathbf{p}:\mbox{BL}(\linearfamily,\mathbf{p})<\infty\},$$
combined with the tautological statement that $\mathbf{p}\in\Pi(\linearfamily)$ if and only if $\linearfamily\in F(\mathbf{p})$; see \cite{Barthe1}. A similar understanding may be reached in the co-rank one case ($n_j=n-1$ for all $j$) using the characterisation of extreme points in Valdimarsson \cite{ValdCJM}. It is also pertinent to note that when the kernels of the maps satisfy the basis condition
\begin{equation}\label{basis}
\bigoplus_{j=1}^m\ker \linearmapj^0 =H,
\end{equation}
a condition which is stable under perturbations of the $\linearmapj $, there is an explicit expression for the Brascamp--Lieb constant $\mbox{BL}(\linearfamily, \mathbf{p})$, from which the conclusion of Theorem \ref{mainthm} (and indeed the \emph{smoothness} of $\linearfamily\mapsto \mbox{BL}(\linearfamily,\mathbf{p})$) is manifest; see \cite{BB}.

If we restrict attention to the so-called \emph{simple} Brascamp--Lieb data, that is, data $(\linearfamily,\mathbf{p})$ for which \eqref{char} holds with strict inequality for all nonzero proper subspaces $V$, much more can be said. In particular, it was shown by Valdimarsson in \cite{ValdBestBest} that the set
$$
F_S(\mathbf{p}):=\{\linearfamily\in F(\mathbf{p}): (\linearfamily,\mathbf{p}) \mbox{ is simple}\}
$$
is open, and that the Brascamp--Lieb constant $\linearfamily\mapsto \mbox{BL}(\linearfamily,\mathbf{p})$ is in fact differentiable there. Since Valdimarsson's argument is based on an application of the implicit function theorem, this regularity conclusion may be pushed even as far as analyticity. However, if $(\linearfamily,\mathbf{p})$ is not simple, that is, there exists a nonzero proper subspace $V$ of $H$ for which \eqref{char} holds with equality (such subspaces are referred to as \emph{critical subspaces}), the situation appears to be much more delicate. In particular, since $F_S(\mathbf{p})$ is open, the mere existence of a critical subspace is unstable under perturbations of $\linearfamily$. This makes a more standard inductive approach to Theorem \ref{mainthm}, via factoring the Brascamp--Lieb constant through critical subspaces, appear quite problematic.

In this paper we also prove local boundedness for certain \emph{localised} versions of the Brascamp--Lieb constant, including $\mbox{BL}_{\loc}(\linearfamily,\mathbf{p})$, the best constant $C$ in the inequality
\begin{eqnarray}\label{BLloc}
\int_{\|x\|_H\leq 1} \prod_{j=1}^m (f_j \circ \linearmapj )^{p_j}
\leq C \prod_{j=1}^m \bigg(\int_{H_j} f_j\bigg)^{p_j}.
\end{eqnarray}
These inequalities have also been the subject of considerable attention; see \cite{L} and \cite{BCCT1} for a gaussian-localised variant, and the more recent \cite{BCCT2} for a characterisation of finiteness of the best constant.

When $\mathbf{p}$ satisfies the scaling condition \eqref{scaling}, $\mbox{BL}_{\loc}(\linearfamily,\mathbf{p})=\mbox{BL}(\linearfamily,\mathbf{p})$. Thus the stability result Theorem \ref{mainthm} will follow from the corresponding result for $\mbox{BL}_{\loc}(\linearfamily,\mathbf{p})$.


Theorem \ref{mainthm} and its local variants (see the forthcoming Theorems \ref{localisedthm} and \ref{partiallylocalisedthm}) are motivated by certain seemingly quite difficult ``perturbed" versions of the Brascamp--Lieb inequality that have arisen in harmonic analysis over the past decade. For such applications we take $H$ and $H_j$ to be $\mathbb{R}^n$ and $\mathbb{R}^{n_j}$, respectively.

The first conjectural generalisation is combinatorial in nature, and takes the form
\begin{equation}\label{combBL}
\int_{\mathbb{R}^n} \prod_{j=1}^m\bigg(\sum_{\alpha_j\in\mathcal{A}_j}f_{j,\alpha_j}\circ L_{j,\alpha_j}\bigg)^{p_j}\leq C
\prod_{j=1}^m\bigg(\sum_{\alpha_j\in\mathcal{A}_j}\int_{\mathbb{R}^{n_j}}f_{j,\alpha_j}\bigg)^{p_j},
\end{equation}
where, for each $1\leq j\leq m$, the linear mappings $(L_{j,\alpha_j})_{\alpha_j\in\mathcal{A}_j}$ are required to be close to a \emph{fixed} surjection $\linearmapj:\mathbb{R}^{n}\rightarrow \mathbb{R}^{n_j}$. Here, for each $1\leq j\leq m$, $\mathcal{A}_j$ indexes the linear maps and arbitrary integrable functions $f_{j,\alpha_j}:\mathbb{R}^{n_j}\rightarrow\mathbb{R}_+$, and the fixed maps
$\linearfamily=(\linearmapj)_{1\leq j\leq m}$ are such that $\mbox{BL}(\linearfamily,\mathbf{p})<\infty$. Such a generalisation is known to hold in some very special cases, the most notable being when the fixed maps $\linearfamily$ and exponents $\mathbf{p}$ correspond to the Loomis--Whitney datum. This is easily seen to be equivalent to the endpoint multilinear Kakeya inequality of Guth \cite{Guth}; see also the non-endpoint versions in \cite{BCT}, \cite{B}, \cite{Guth2} and applications beginning with \cite{BG}. Considering indexing sets $\mathcal{A}_j$, with each consisting of just one element, reveals the statement of Theorem \ref{mainthm} as a necessary feature for such a combinatorial generalisation to hold.

Inequality \eqref{combBL} is best understood via an equivalent formulation obtained by testing it on finite sums of characteristic functions of $\delta$-balls, upon which it may be expressed as
\begin{equation}\label{combBLKak}
\int_{\mathbb{R}^n}\prod_{j=1}^m\Bigl(\sum_{T_j\in\mathbb{T}_j}\chi_{T_j}\Bigr)^{p_j}\leq C\delta^n\prod_{j=1}^m(\#\mathbb{T}_j)^{p_j},
\end{equation}
uniformly in $\delta$, where for each $j$, $\mathbb{T}_j$ denotes an arbitrary finite collection of $\delta$-neighbourhoods of $n_j'$-dimensional affine subspaces of $\mathbb{R}^n$ which, modulo translations, are close to the fixed subspace $V_j:=\ker L_j$. Here $
n_j':=n-n_j
$, and we use the standard metric on the Grassmann manifold of $n_j'$-dimensional subspaces of $\mathbb{R}^n$.
Notice that the characterisation of finiteness of $\mbox{BL}(\linearfamily^0,\mathbf{p})$, given by \eqref{scaling} and \eqref{char}, depends only on the \emph{kernels} of the linear maps $\linearmapj$. In particular, for $V_j:=\ker L_j$, the condition \eqref{char} may be rewritten as $$\dim(V)\leq\sum_{j=1}^mp_j\dim(V\cap V_j^\perp).$$

A second generalisation of the Brascamp--Lieb inequality is oscillatory in nature, and belongs to the restriction theory of the Fourier transform. To describe this suppose that, for each $1\leq j\leq m$, $\Sigma_j:U_j\rightarrow\mathbb{R}^n$ is a smooth parametrisation of a $n_j$-dimensional submanifold $S_j$ of $\mathbb{R}^n$ by a neighbourhood $U_j$ of the origin in $\mathbb{R}^{n_j}$. We associate to each $\Sigma_j$ the \textit{extension operator}
$$
E_jg_j(\xi):=\int_{U_j}e^{2\pi i\xi\cdot\Sigma_j(x)}g_j(x)dx,
$$
where $\xi\in\mathbb{R}^n$. In this setting it is natural to conjecture that if $\mbox{BL}(\linearfamily,\mathbf{p})<\infty$, where $\linearmapj:=(\mathrm{d}\Sigma_j(0))^*$ for each $j$, then provided the neighbourhoods $U_j$ of $0$ are chosen small enough, the inequality
\begin{equation}\label{genmultrest}
\int_{\mathbb{R}^n}\prod_{j=1}^m|E_jg_j|^{2p_j}\leq C\prod_{j=1}^m\|g_j\|_{L^2(U_j)}^{2p_j}
\end{equation}
holds for all $g_j\in L^2(U_j)$, $1\leq j\leq m$. The weaker inequality
\begin{equation*}
\int_{B(0,R)}\prod_{j=1}^m|E_jg_j|^{2p_j}\leq C_\varepsilon R^\varepsilon\prod_{j=1}^m\|g_j\|_{L^2(U_j)}^{2p_j},
\end{equation*}
involving an arbitrary $\varepsilon>0$ loss was established in the particular case when $(\linearfamily, \mathbf{p})$ is the Loomis--Whitney datum in \cite{BCT}, and has had extensive applications and developments beginning with \cite{BG}; see also \cite{Bourgaintrig}, \cite{Bourgainmax}, \cite{BSSY}, \cite{BDAnnals}, \cite{BourgainDER}, \cite{BourSome}, \cite{BDgeneral}, \cite{BourgainWatt}, \cite{BD}, \cite{LV}. The endpoint \eqref{genmultrest} is only known in very elementary situations, and is easily seen to be best possible in the sense that $\mbox{BL}(\linearfamily,\mathbf{p})<\infty$ provides a necessary condition on the $p_j$ by taking \textit{linear} $\Sigma_j$; see \cite{B} for further discussion.


A third, seemingly more modest generalisation of the Brascamp--Lieb inequality, originating in \cite{BCW}, involves dropping the linearity requirement on the maps $\linearmapj $, and instead considering $\nonlinearmapj$ smooth
submersions in a neighbourhood of a point $x_0 \in \mathbb{R}^n$. In this context it seems natural to conjecture (see \cite{BB}) that provided
$$
\mbox{BL}_{\loc}(\mathrm{d} \textbf{\mbox{B}}(x_0), \textbf{\mbox{p}}) < \infty,
$$
there exists a neighbourhood $U$ of $x_0$ and a finite constant $C$
such that
\begin{equation}\label{localst}
\int_U \prod_{j=1}^m (f_j \circ \nonlinearmapj)^{p_j} \leq C \prod_{j=1}^m \bigg(\int_{\mathbb{R}^{n_j}} f_j  \bigg)^{p_j},
\end{equation}
or equivalently
\begin{equation}\label{localstform}
\int_U \prod_{j=1}^m f_j \circ \nonlinearmapj \leq C \prod_{j=1}^m \|f_j\|_{L^{q_j}(\mathbb{R}^{n_j})},
\end{equation}
where, as in \eqref{BLeq}, $q_j=p_j^{-1}$.
Here $\mathrm{d} \textbf{\mbox{B}}(x_0)=(\mathrm{d}\nonlinearmapj(x_0))$,
where $\mathrm{d}\nonlinearmapj(x_0):\mathbb{R}^n\rightarrow\mathbb{R}^{n_j}$
denotes the derivative of $\nonlinearmapj$ at the point $x_0$. Such a
generalisation has been shown to hold under the basis condition \eqref{basis} on the derivative maps
$\mathrm{d}\nonlinearmapj(x_0)$; we refer to \cite{BB}, \cite{BCW}, \cite{BHT}, \cite{BH} for this and applications to
problems in euclidean harmonic analysis and dispersive PDE. An elementary scaling and limiting argument shows that if \eqref{localst} holds then there exists a neighbourhood $U'$ of $x_0$ such that $$\sup_{x\in U'}\mbox{BL}_{\loc}(\mathrm{d} \textbf{\mbox{B}}(x), \textbf{\mbox{p}}) < \infty,$$ a statement which is closely related to the local boundedness of the (linear) localised Brascamp--Lieb constant; see \cite{BBG1} for further details. The local variant of Theorem \ref{mainthm} (see Theorem \ref{localisedthm}) may thus be viewed as a modest first step towards the general form of this nonlinear Brascamp--Lieb conjecture.

Our applications of Theorem \ref{mainthm} consist of proving certain weak forms of the generalised Brascamp--Lieb inequalities \eqref{combBLKak}, \eqref{genmultrest} and \eqref{localst}, where one accepts some arbitrarily small loss in regularity of the input functions. All of these combine our stability results with well-known variants of the induction-on-scales method.

Our application to the variant \eqref{combBL} is best expressed in terms of the equivalent geometric formulation \eqref{combBLKak}, and is the following.
\begin{theorem}\label{CBLapp}
Suppose $(\linearfamily,\mathbf{p})$ is a Brascamp--Lieb datum for which $\mbox{\emph{BL}}(\linearfamily,\mathbf{p})<\infty$. Then there exists $\nu>0$ such that for every $\varepsilon>0$,
\begin{equation}\label{combBLweak}
\int_{[-1,1]^n}\prod_{j=1}^m\Bigl(\sum_{T_j\in\mathbb{T}_j}\chi_{T_j}\Bigr)^{p_j} \leq C_\varepsilon\delta^{n-\varepsilon}
\prod_{j=1}^m\left(\#\mathbb{T}_j\right)^{p_j}
\end{equation}
holds for all finite collections $\mathbb{T}_j$ of $\delta$-neighbourhoods of $n_j'$-dimensional affine subspaces of $\mathbb{R}^n$ which, modulo translations, are within a distance $\nu$ of the fixed subspace $V_j:=\ker L_j$.
\end{theorem}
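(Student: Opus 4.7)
The plan is to prove Theorem~\ref{CBLapp} by combining the stability result Theorem~\ref{mainthm} with a standard induction on scales. Introduce the shorthand $\mathcal{K}_\nu(\delta)$ for the smallest constant such that
$$
\int_{[-1,1]^n} \prod_{j=1}^m \Bigl(\sum_{T_j \in \mathbb{T}_j} \chi_{T_j}\Bigr)^{p_j} \leq \mathcal{K}_\nu(\delta)\, \delta^n \prod_{j=1}^m (\#\mathbb{T}_j)^{p_j}
$$
holds uniformly over $\delta$-tube collections whose directions lie within $\nu$ of the corresponding $V_j$. The goal is $\mathcal{K}_\nu(\delta) \leq C_\varepsilon \delta^{-\varepsilon}$ for every $\varepsilon > 0$. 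First, apply Theorem~\ref{mainthm} to fix $\nu > 0$ so that $\mbox{BL}(\linearfamily',\mathbf{p}) \leq C_0$ uniformly whenever $\|\linearfamily' - \linearfamily\| < \nu$.

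The inductive step operates at two scales. Cover $[-1,1]^n$ by essentially disjoint balls $B$ of an intermediate radius $\rho \in [\nu, 1)$. Rescaling each $B$ to the unit ball transforms the $\delta$-tubes meeting $B$ into $(\delta/\rho)$-tubes with the same Grassmannian directions, so the inductive hypothesis together with the scaling identity $\sum_j p_j n_j = n$ yields
$$
\int_B \prod_{j=1}^m \Bigl(\sum_{T_j \in \mathbb{T}_j} \chi_{T_j}\Bigr)^{p_j} \lesssim \mathcal{K}_\nu(\delta/\rho)\, \delta^n \prod_{j=1}^m (\#\mathbb{T}_j(B))^{p_j},
$$
where $\mathbb{T}_j(B) := \{T \in \mathbb{T}_j : T \cap B \neq \emptyset\}$. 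Summing over $B$ reduces us to bounding $\sum_B \prod_j (\#\mathbb{T}_j(B))^{p_j}$. Since $\#\mathbb{T}_j(B) \lesssim F_j(c_B)$ for $F_j := \sum_T \chi_{T^\rho}$ (the indicator sum of the $\rho$-thickenings), and $F_j$ varies slowly on scale $\rho$, this sum is $\lesssim \rho^{-n} \int \prod_j F_j^{p_j}$.

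For the rough-scale estimate on $\int \prod_j F_j^{p_j}$, the choice $\rho \geq \nu$ allows each $T^\rho$ to be enveloped by an $O(\rho)$-tube in the exact direction $V_j$, so $F_j \leq g_j \circ \linearmapj$ for a sum of $O(\rho)$-balls $g_j$ in $\mathbb{R}^{n_j}$. Applying $\mbox{BL}(\linearfamily,\mathbf{p}) < \infty$ then gives $\int \prod_j F_j^{p_j} \lesssim \rho^n \prod_j (\#\mathbb{T}_j)^{p_j}$, which assembles into the recursion
$$
\mathcal{K}_\nu(\delta) \leq C\, \mathcal{K}_\nu(\delta/\rho), \qquad \rho \in [\nu, 1).
$$
Iterating this directly yields a polynomial bound $\mathcal{K}_\nu(\delta) \lesssim \delta^{-\alpha_0}$ with $\alpha_0 = \log C / \log(1/\nu)$.

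The main obstacle is upgrading this polynomial bound to $\mathcal{K}_\nu(\delta) \leq C_\varepsilon \delta^{-\varepsilon}$ for arbitrarily small $\varepsilon > 0$. The plan is to feed the polynomial bound back into a refined rough-scale step that allows $\rho < \nu$, discretizing the tube directions to an angular scale finer than $\rho$ and invoking Theorem~\ref{mainthm} separately for each direction class (the perturbed BL constants being uniformly bounded). The resulting $\ell^{p_j}$-to-$\ell^1$ loss from recombining direction classes is absorbed by iterating the refined recursion; balancing the angular discretization against $\rho$ permits the exponent to be driven below any prescribed $\varepsilon > 0$, at the cost of a constant $C_\varepsilon$ that blows up as $\varepsilon \to 0$.
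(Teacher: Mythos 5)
Your two--scale recursion is essentially Guth's argument, which is also the engine of the paper's proof, and the first part of your proposal is sound: with $\nu$ fixed by Theorem \ref{mainthm} and $\rho\in[\nu,1)$, the recursion $\mathcal{K}_\nu(\delta)\leq C\,\mathcal{K}_\nu(\delta/\rho)$ with $C$ independent of $\rho$ and $\nu$ is correct, and iterating with $\rho=\nu$ gives the polynomial bound $\delta^{-\alpha_0}$, $\alpha_0=\log C/\log(1/\nu)$. You have also correctly identified why this is not enough: since $\nu$ was fixed before $\varepsilon$, the exponent $\alpha_0$ is a fixed positive number.

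The gap is in the final paragraph, where the upgrade to $\delta^{-\varepsilon}$ is asserted but not achieved, and the mechanism you sketch does not work. If at each stage of the iteration you discretise directions into $N\sim(\rho')^{-d}$ classes and recombine via $\sum_\alpha a_\alpha^{p_j}\leq N^{1-p_j}(\sum_\alpha a_\alpha)^{p_j}$, the per-step loss is a fixed positive power of $1/\rho'$. With $\rho'\sim\rho$ the recursion becomes $\mathcal{K}(\delta)\leq C\rho^{-c}\mathcal{K}(\delta/\rho)$, and iterating yields an exponent $c+\log C/\log(1/\rho)$, which tends to $c>0$ rather than to $0$ as $\rho\to0$: the recombination loss accumulates to a \emph{fixed} polynomial exponent no matter how you balance the parameters, and it is not clear how feeding the crude bound $\delta^{-\alpha_0}$ back in would remove it. The correct move --- and the point of Theorem \ref{mainthm} in the paper's proof --- is to perform the direction decomposition \emph{once}, before the induction begins: partition each $\mathbb{T}_j$ into $O_\varepsilon(1)$ classes of angular width $\nu(\varepsilon)\ll\nu$, use Theorem \ref{mainthm} to guarantee $\mathrm{BL}(\linearfamily',\mathbf{p})\leq C_0$ uniformly for every perturbed base datum $\linearfamily'$ arising from a choice of classes, run your entire recursion within each fixed choice of classes at angular scale $\nu(\varepsilon)$ (so the recursion constant $\kappa$ depends only on $C_0$ and $n$, not on $\nu(\varepsilon)$, and the resulting exponent $\log\kappa/\log(1/\nu(\varepsilon))$ can be made $<\varepsilon$), and only then recombine the classes using $p_j\leq1$, paying a constant depending on $\varepsilon$ but not on $\delta$. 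Placing the decomposition outside the induction, rather than inside each step of it, is the missing idea.
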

In the particular case of \emph{gaussian-extremisable} Brascamp--Lieb data $(\linearfamily,\mathbf{p})$, the above theorem may be seen as a consequence of Corollary 4.2 in \cite{BCT}; see \cite{BCCT1} for a characterisation of such data.
As in the case where $(\linearfamily,\mathbf{p})$ is the Loomis--Whitney datum (see \cite{BCT}), the above theorem implies the following very general restriction theorem.  We refer to \cite{BD} for recent number-theoretic applications of these ``multilinear" restriction and Kakeya-type inequalities.
\begin{theorem}\label{OBLapp}
Suppose that $\mbox{\emph{BL}}(\linearfamily,\mathbf{p})<\infty$, where $\linearmapj:=(\mathrm{d}\Sigma_j(0))^*$ for each $j$. Then there exist neighbourhoods $U_j$ of $0\in\mathbb{R}^{n_j}$, $1\leq j\leq m$, such that for every $\varepsilon>0$,
\begin{equation}\label{genmultrestloss}
\int_{B(0,R)}\prod_{j=1}^m|E_jg_j|^{2p_j}\leq C_\varepsilon R^\varepsilon\prod_{j=1}^m\|g_j\|_{L^2(U_j)}^{2p_j}
\end{equation}
holds for all $g_j\in L^2(U_j)$, $1\leq j\leq m$, and all $R \geq 1$.
\end{theorem}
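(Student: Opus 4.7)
The plan is to use the induction-on-scales framework of Bennett--Carbery--Tao \cite{BCT}, with the Kakeya-type estimate of Theorem \ref{CBLapp} supplying the geometric input. Let $A(R)$ denote the best constant for which
\begin{equation*}
\int_{B(0,R)} \prod_{j=1}^m |E_j g_j|^{2p_j} \leq A(R) \prod_{j=1}^m \|g_j\|_{L^2(U_j)}^{2p_j}
\end{equation*}
holds uniformly in $g_j \in L^2(U_j)$. The target is a recursive inequality of the form $A(R) \leq C_\varepsilon R^{\varepsilon/2} A(R^{1/2})$, which together with the trivial base case $A(1)\lesssim 1$ iterates in $O(\log\log R)$ steps to $A(R)\leq C_{\varepsilon'}R^{\varepsilon'}$ for any $\varepsilon'>0$.

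To derive the recursion, cover $B(0,R)$ by a finitely overlapping family of balls $B_k=B(y_k,R^{1/2})$ and perform a wave packet decomposition of each $g_j$ adapted to $\Sigma_j$ at frequency scale $R^{-1/2}$. This writes $E_j g_j\approx \sum_{T_j} c_{T_j}\phi_{T_j}$, where $\phi_{T_j}$ is essentially the $L^\infty$-normalised indicator of a tube $T_j$ of dimensions $R^{1/2}\times\cdots\times R^{1/2}\times R$ and $\sum_{T_j}|c_{T_j}|^2\lesssim \|g_j\|_{L^2(U_j)}^2$. The axes of the $T_j$ are normals to $\Sigma_j$ at the relevant frequency points; by smoothness of $\Sigma_j$ these deviate from $V_j=\ker L_j$ by $O(\mathrm{diam}\,U_j)$, so choosing $U_j$ sufficiently small at the outset places the tube directions in the $\nu$-neighbourhood of $V_j$ required by Theorem \ref{CBLapp}.

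On each $B_k$, a standard pigeonholing over the coefficients $|c_{T_j}|$ reduces the integrand to a product of characteristic functions of tubes with essentially constant weights. Applying Theorem \ref{CBLapp} at scale $\delta=R^{-1/2}$ then contributes a factor of order $R^{\varepsilon/2}$. A parabolic rescaling $\xi\mapsto R^{-1/2}\xi$, combined with a Taylor expansion of $\Sigma_j$ about a frequency base point, converts the localised estimate on $B_k$ into one of the same form at scale $R^{1/2}$ for rescaled parametrisations $\widetilde{\Sigma}_j$ whose derivatives at $0$ differ from $\mathrm{d}\Sigma_j(0)$ by $O(R^{-1/2})$. By Theorem \ref{mainthm}, the Brascamp--Lieb hypothesis $\mathrm{BL}((\mathrm{d}\widetilde{\Sigma}_j(0))^*,\mathbf{p})<\infty$ persists with a \emph{uniform} bound, so the induction hypothesis $A(R^{1/2})$ applies uniformly to every rescaled sub-problem; summing over $k$ then closes the recursion.

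The main obstacle is precisely this uniformity. At each iterative stage the data $(\mathrm{d}\widetilde{\Sigma}_j(0))^*$ are perturbations of the original $(\mathrm{d}\Sigma_j(0))^*$; without Theorem \ref{mainthm}, the associated Brascamp--Lieb constants could in principle blow up under such perturbations, obstructing both the Kakeya input and the closing of the recursion. The local boundedness of $\mathrm{BL}$ is therefore the essential rigidity that extends the BCT scheme from the Loomis--Whitney setting to arbitrary Brascamp--Lieb data. Secondary issues---Schwartz-tail control in the wave packet decomposition, handling of multiplicities under pigeonholing, and the a priori choice of the $U_j$ compatible with all subsequent rescalings---are technical but follow the established playbook.
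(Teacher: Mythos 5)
Your overall framework (induction on scales driven by the Kakeya-type input of Theorem \ref{CBLapp} at scale $\delta=R^{-1/2}$, with the neighbourhoods $U_j$ chosen small so that the wave-packet directions fall within $\nu$ of $\ker L_j$) is the right one and matches the paper's strategy. However, the step you use to close the recursion contains a genuine gap. You propose to pass from the estimate on a ball $B_k$ of radius $R^{1/2}$ to the induction hypothesis $A(R^{1/2})$ by a parabolic rescaling producing new parametrisations $\widetilde{\Sigma}_j$, and you invoke Theorem \ref{mainthm} to control the resulting perturbed Brascamp--Lieb data. In the multilinear setting this rescaling is not available: parabolic rescaling is an \emph{anisotropic} change of the single frequency variable $\xi$ adapted to the tangent/normal splitting of one surface, and the $m$ surfaces $S_1,\ldots,S_m$ have incompatible splittings, so no single change of variables rescales them all simultaneously (an isotropic dilation $\xi\mapsto R^{-1/2}\xi$ merely dilates the surfaces and does not reproduce the original problem at the smaller scale). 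This is precisely why the Bennett--Carbery--Tao induction, which the paper follows, never rescales the surfaces: the induction hypothesis is applied to the \emph{same} surfaces on the smaller ball $B(x,R^{1/2})$, using only the modulation invariance encoded in $\phi^x_{R^{1/2}}(y)=e^{-2\pi i x\cdot y}\phi_{R^{1/2}}(y)$ to recentre. Consequently Theorem \ref{mainthm} is not re-invoked at each inductive stage for perturbed surface data; its entire contribution to Theorem \ref{OBLapp} is already channelled through Theorem \ref{CBLapp}.

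A related structural point: your induction quantity $A(R)$ is defined directly in terms of $E_jg_j$ with $g_j\in L^2(U_j)$, but after localising to $B_k$ and decomposing into caps and wave packets the pieces are no longer extension operators of $L^2(U_j)$ functions, so the induction hypothesis cannot be applied to them as stated. The paper avoids this by inducting on the constant $\mathcal{C}(R)$ in the inequality for arbitrary $G_j\in L^2(\mathbb{R}^n)$ with $\mathrm{supp}\,\widehat{G_j}\subseteq S_j+O(R^{-1})$; this class is stable under the cap decomposition $\widehat{G^{\rho_j}_j}=\widehat{G_j}\chi_{\rho_j}$ and under multiplication by $\widehat{\phi}_{R^{1/2}}$, and the reduction of \eqref{genmultrestloss} to this statement is a routine computation with $E_jg_j=\widehat{h_jd\sigma_j}$. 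Your pigeonholing over wave-packet coefficients is also avoidable (the paper feeds the quantities $\frac{\chi_{\rho^*_j}}{|\rho^*_j|}*|\widetilde{G}^{\rho_j}_j|^2$ directly into the averaged Kakeya estimate \eqref{e:Kakeyarescaled}), though that is only a logarithmic matter. The essential repair is to replace the parabolic rescaling by the modulation-invariance argument and to formulate the induction quantity for Fourier-supported functions.
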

Our application to the variant \eqref{localst} is best expressed in terms of the equivalent formulation \eqref{localstform}, and involves a regularity loss that may be captured in the scale of the classical Sobolev spaces. (Absorbing this loss in the scale of $L^p$ spaces appears to be less clear.)
\begin{theorem}\label{NLBLapp}
Suppose $(\linearfamily,\mathbf{p})$ is a  Brascamp--Lieb datum for which $\mbox{\emph{BL}}(\linearfamily,\mathbf{p})<\infty$, and that for each $1\leq j\leq m$, $\nonlinearmapj:\mathbb{R}^{n}\rightarrow\mathbb{R}^{n_j}$ is a smooth submersion in a neighbourhood of the origin satisfying $\mathrm{d}\nonlinearmapj(0)=\linearmapj $. Then there exists a neighbourhood $U$ of the origin in $\mathbb{R}^n$ such that for every $\varepsilon>0$,
\begin{equation}
\int_U\prod_{j=1}^m f_j\circ \nonlinearmapj\leq C_\varepsilon \prod_{j=1}^m\|f_j\|_{L^{q_j}_\varepsilon(\mathbb{R}^{n_j})},
\end{equation}
where $q_j = p_j^{-1}$.
\end{theorem}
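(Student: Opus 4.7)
The strategy is to combine the stability of the Brascamp--Lieb constant (Theorem \ref{mainthm} and its localised variant Theorem \ref{localisedthm}) with induction on scales. The underlying mechanism is that on a ball of radius $\rho$ centred at $x_0$ the map $\nonlinearmapj$ is well approximated by its affine linearisation $y\mapsto \nonlinearmapj(x_0)+\mathrm{d}\nonlinearmapj(x_0)(y-x_0)$, and by stability, the corresponding linear Brascamp--Lieb constants for the perturbed data $\mathrm{d}\mathbf{B}(x_0)$ are uniformly bounded for $x_0$ in a small neighbourhood of the origin.

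\emph{Step 1.} By a Littlewood--Paley decomposition of each $f_j$ on $\mathbb{R}^{n_j}$, combined with the standard argument trading frequency-localised $R^\varepsilon$ losses for Sobolev losses of type $L^{q_j}_\varepsilon$, Theorem \ref{NLBLapp} is reduced to proving
\begin{equation*}
\Bigl|\int_U\prod_{j=1}^m f_j\circ \nonlinearmapj\Bigr|\leq C_\varepsilon R^\varepsilon\prod_{j=1}^m\|f_j\|_{L^{q_j}(\mathbb{R}^{n_j})}
\end{equation*}
for every $\varepsilon>0$ and $R\geq 1$, whenever each $\widehat{f_j}$ is supported in the ball of radius $R$.

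\emph{Step 2.} We establish the frequency-localised estimate by induction on $R$, with intermediate scale $\rho=R^{-1/2}$. Cover $U$ by a bounded-overlap family of balls $B(x_\alpha,\rho)$. On each such ball, Taylor's formula yields $\nonlinearmapj(x)=\nonlinearmapj(x_\alpha)+\mathrm{d}\nonlinearmapj(x_\alpha)(x-x_\alpha)+O(\rho^2)$; since $\widehat{f_j}$ is supported at scale $R$, the translation error $O(\rho^2)=O(R^{-1})$ in the argument of $f_j$ can be absorbed via Bernstein-type estimates, producing only a small loss at each step. Rescaling $B(x_\alpha,\rho)$ to the unit ball converts the resulting localised integral into one of the same form, but now with the linear maps $\linearmapj$ replaced by $\mathrm{d}\nonlinearmapj(x_\alpha)$ and the input frequency reduced from $R$ to $\rho R=R^{1/2}$. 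By continuity of $x\mapsto\mathrm{d}\nonlinearmapj(x)$ and Theorem \ref{localisedthm}, the constants $\mbox{BL}_{\loc}(\mathrm{d}\mathbf{B}(x_\alpha),\mathbf{p})$ stay uniformly bounded in $\alpha$, so the inductive hypothesis applies at scale $R^{1/2}$. Summing the contributions from the $\sim\rho^{-n}$ balls and unwinding the rescaling using the scaling relation $\sum p_j n_j=n$ closes the induction with the required $R^\varepsilon$ loss.

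\emph{Main obstacle.} The delicate part is the bookkeeping in Step 2. The linearisation of $\nonlinearmapj$ on each $\rho$-ball must be carried out with an error small enough to be absorbed into an $R^{\varepsilon/2}$ factor at each induction stage; and the reassembly of the contributions from the small balls into a clean bound by $\prod_j\|f_j\|_{L^{q_j}}$ requires careful matching of the Bernstein estimates against the rescaling. The essential new input that makes the argument possible in this generality is the uniform boundedness of $\mbox{BL}_{\loc}(\mathrm{d}\mathbf{B}(x_\alpha),\mathbf{p})$ supplied by Theorem \ref{localisedthm}; without it one would be forced, as in the prior literature, to impose the basis condition \eqref{basis} on the kernels of $\linearmapj$ in order to obtain explicit formulas for the local Brascamp--Lieb constants, a restriction that the stability theorem now removes.
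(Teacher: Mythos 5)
Your overall strategy --- stability of the Brascamp--Lieb constant plus a Bourgain--Guth induction on scales, with a Littlewood--Paley decomposition converting an $R^\varepsilon$ loss into an $L^{q_j}_\varepsilon$ Sobolev loss --- is indeed the architecture of the paper's proof. However, Step 2 contains a genuine gap, not mere bookkeeping, in two respects. First, the reassembly: as you describe it, the inductive hypothesis is applied on each ball $B(x_\alpha,\rho)$ and the contributions are then summed over the $\sim\rho^{-n}$ balls. After rescaling, each such application produces local norms $\|f_j\|_{L^{q_j}(N_{j,\alpha})}$ over neighbourhoods $N_{j,\alpha}\supseteq \nonlinearmapj(B(x_\alpha,\rho))$, and because $\nonlinearmapj$ has $n_j'$-dimensional fibres these neighbourhoods cover $\nonlinearmapj(U)$ with multiplicity $\sim\rho^{-n_j'}$; an $\ell^1$-type sum over $\alpha$ of products of these local norms is not controlled by $\prod_j\|f_j\|_{q_j}$ without an additional orthogonality or Kakeya-type input, and you supply none. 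The paper's induction is organised the other way around: the \emph{stable linear} inequality (via Theorem \ref{mainthm}) is what is applied on each small cube $Q$, and the outputs $|Q|^{-n_j/n}\int_{\nonlinearmapj(Q)}f_j\lesssim P_{c\sqrt{\delta}}*f_j(\nonlinearmapj(x_Q))$ are reassembled, by averaging over $x_Q\in Q$, into a single copy of the \emph{same} nonlinear integral with data $P_{c\sqrt{\delta}}*f_j$; the inductive hypothesis is then invoked once, globally, at the coarser scale. Second, running the induction on frequency-localised (hence signed, oscillating) functions is problematic: the pointwise absorption of the $O(R^{-1})$ linearisation error, and the assertion that the reassembled data again has frequency support at scale $R^{1/2}$, cannot be verified for such inputs across $O(\log\log R)$ iterations. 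The paper circumvents this by introducing the Harnack-type class $L^1(\mathbb{R}^{n_j};\delta)$ of \eqref{harnack}, which consists of nonnegative functions, absorbs the linearisation error pointwise, is stable under the Poisson mollification produced by one induction step, and preserves the $L^1$ norm; the Littlewood--Paley decomposition is performed only once, outside the induction, when deducing the theorem from Proposition \ref{NLBLenough}. Your proposal would need to be restructured along these lines before the induction closes.
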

Here, we use the notation $\|f\|_{L^q_\varepsilon(\mathbb{R}^n)} = \| (1 - \Delta)^{\varepsilon/2}f\|_{L^q(\mathbb{R}^n)}$ for $q \geq 1$ and $n \in \mathbb{N}$. It is worth noting that the proof allows the smoothness condition on the $B_j$ to be relaxed to $C^{1,\beta}$ for any $\beta>0$.

\emph{Structure of the paper.} In Section \ref{stabSection} we prove Theorem \ref{mainthm} via the corresponding statement for the local Brascamp--Lieb constant $\mbox{BL}_{\loc}(\linearfamily,\mathbf{p})$. We conclude Section \ref{stabSection} by unifying these results in the setting of partially-localised Brascamp--Lieb constants. We prove Theorems \ref{CBLapp}--\ref{NLBLapp} in Section \ref{Sectionappl}.

\emph{Acknowledgement.} We thank Ciprian Demeter for clarifying the
connections between this work and the concurrent preprint \cite{BD}.

\section{Stability of the Brascamp--Lieb constant}\label{stabSection}
\subsection{Openness}\label{opennesssection}
Although the proof of the local boundedness of $\linearfamily\mapsto \mbox{BL}(\linearfamily,\mathbf{p})$ simultaneously establishes the openness of $F(\mathbf{p})$, the latter permits a much more elementary approach. We describe this first.

It suffices to prove that if the dimension condition \eqref{char} holds for $\linearfamily=\linearfamily^0$ then there exists $\delta>0$ such that \eqref{char} holds whenever $\|\linearfamily-\linearfamily^0\|<\delta$. For each $1\leq k\leq n$ let $\mathcal{E}_k$ denote the compact set of all orthonormal sets $\mathbf{e}:=\{e_1,\hdots,e_k\}$ in $H$. This notation allows us to rewrite \eqref{char} as
\begin{equation}\label{char1}
k\leq\sum_{j=1}^m p_j\dim(\langle \linearmapj e_1,\hdots,\linearmapj e_k\rangle)
\end{equation}
for all $\mathbf{e}\in\mathcal{E}_k$ and $1\leq k\leq n$.

Fix $k$ and let $\mathbf{e}\in\mathcal{E}_k$. Since \eqref{char1} holds with $\linearfamily=\linearfamily^0$, for each $1\leq j\leq m$ we may choose a subset $I_j\subseteq\{1,\hdots,k\}$ satisfying $|I_j|=\dim(\langle \linearmapj^0e_1,\hdots,\linearmapj^0e_k\rangle)$,
\begin{equation}\label{link}
k\leq \sum_{j=1}^mp_j|I_j|
\end{equation}
and
$$\bigwedge_{i\in I_j} \linearmapj^0e_i\not=0.$$

Since
$$
(\linearfamily,\mathbf{e}')\mapsto\bigwedge_{i\in I_j} \linearmapj e_i'\in\Lambda^{|I_j|}(H_j)
$$
is continuous for each $j$, there exist $\varepsilon(\mathbf{e}), \delta(\mathbf{e})>0$ such that
$$\bigwedge_{i\in I_j} \linearmapj e_i'\not=0$$
for each $j$, whenever $\|\mathbf{e}'-\mathbf{e}\|<\varepsilon(\mathbf{e})$ and $\|\linearfamily-\linearfamily^0\|<\delta(\mathbf{e})$.
In particular, $\dim(\langle \linearmapj e_1',\hdots,\linearmapj e_k'\rangle)\geq |I_j|$ for each $j$, and so by \eqref{link},
$$
k\leq\sum_{j=1}^m p_j\dim(\langle \linearmapj e_1',\hdots,\linearmapj e_k'\rangle)
$$
whenever $\|\mathbf{e}'-\mathbf{e}\|<\varepsilon(\mathbf{e})$ and $\|\linearfamily-\linearfamily^0\|<\delta(\mathbf{e})$.
Since $\mathcal{E}_k$ is compact there exists a finite collection $\mathbf{e}^1,\hdots,\mathbf{e}^N\in\mathcal{E}_k$ such that the sets
$$
\{\mathbf{e}'\in\mathcal{E}_k:\|\mathbf{e}'-\mathbf{e}^\ell\|<\varepsilon(\mathbf{e}^\ell)\},
$$
with $\ell=1,\hdots,N$, cover $\mathcal{E}_k$. Finally, choosing $\delta=\min\{\delta(\mathbf{e}^1),\hdots,\delta(\mathbf{e}^N)\}$ we conclude that \eqref{char1} holds whenever $\|\linearfamily-\linearfamily^0\|<\delta$ and $\mathbf{e}\in\mathcal{E}_k$. Since there are boundedly many such $k$, the claimed openness follows.

\subsection{Local boundedness for localised data}
In this section we prove the following local version of Theorem \ref{mainthm}:
\begin{theorem}\label{localisedthm}
Suppose that $(\linearfamily^0,\mathbf{p})$ is a Brascamp--Lieb datum  such that  $\emph{BL}_{\loc}(\linearfamily^0,\mathbf{p})<\infty$. Then there exists $\delta>0$ and a constant $C<\infty$ such that
$$\mbox{\emph{BL}}_{\loc}(\linearfamily,\mathbf{p})\leq C$$
whenever $\|\linearfamily-\linearfamily^0\|<\delta$.
\end{theorem}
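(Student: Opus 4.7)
My plan is to argue by contradiction combined with Lieb's gaussian analysis of the Brascamp--Lieb constant, adapted to the localised setting. Suppose $\mbox{BL}_{\loc}(\cdot,\mathbf{p})$ is not locally bounded at $\linearfamily^0$; then there exists a sequence $\linearfamily^{(k)}\to\linearfamily^0$ with $\mbox{BL}_{\loc}(\linearfamily^{(k)},\mathbf{p})\to\infty$. Testing the localised inequality on centred gaussians $f_j(y)=e^{-\pi\langle A_jy,y\rangle}$, and reducing near-extremisers to gaussians in the spirit of \cite{L}, one obtains positive-definite tuples $(A_j^{(k)})_j$ for which the gaussian ratio
$$\frac{\prod_{j=1}^m (\det A_j^{(k)})^{p_j/2}}{\det\bigl(\sum_{j=1}^m p_j (L_j^{(k)})^{*} A_j^{(k)} L_j^{(k)}\bigr)^{1/2}}$$
(modulo a bounded localisation factor) gives a lower bound comparable to $\mbox{BL}_{\loc}(\linearfamily^{(k)},\mathbf{p})$. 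The task reduces to extracting from this sequence a contradiction with $\mbox{BL}_{\loc}(\linearfamily^0,\mathbf{p})<\infty$.

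After a normalisation such as $\max_j\|A_j^{(k)}\|=1$, I would pass to a subsequence along which each $A_j^{(k)}$ converges to some $A_j^\infty\geq 0$. Two scenarios then arise: either (i) every $A_j^\infty$ is strictly positive-definite, in which case the limiting gaussian ratio is infinite at $\linearfamily^0$, directly contradicting $\mbox{BL}_{\loc}(\linearfamily^0,\mathbf{p})<\infty$; or (ii) some $A_j^\infty$ is singular, in which case the collapsing eigenspaces identify a subspace $V\subseteq H$ along which the dimension condition \eqref{char} must fail for $\linearfamily^0$, again contradicting finiteness there. The stability of \eqref{char} under perturbation---established by adapting the compactness argument of \S\ref{opennesssection} to the characterisation of $\mbox{BL}_{\loc}$-finiteness from \cite{BCCT2}---is precisely what allows the passage to the limit, as it ensures that the dimension-condition-governed behaviour at $\linearfamily^0$ persists to $\linearfamily^{(k)}$ for large $k$.

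The main obstacle is scenario (ii): identifying the offending subspace $V$ from the joint asymptotic behaviour of the $A_j^{(k)}$ and showing that it genuinely obstructs \eqref{char} at $\linearfamily^0$. Since critical subspaces at $\linearfamily^0$ are unstable under perturbation---as emphasised in the introduction, which is what makes a standard factoring/inductive approach through critical subspaces problematic---the subspace $V$ cannot be read off from any a priori eigenstructure of the $L_j^{(k)}$; it must be extracted through a joint concentration-compactness analysis of the degenerating gaussians, matching the directions of determinant-collapse with the quadratic forms $p_j (L_j^0)^* L_j^0$. I would also expect the Lieb-reduction step itself to require care, since $\mbox{BL}_{\loc}$ is not directly gauged by gaussians via Lieb's theorem when \eqref{scaling} fails; one would likely need a modified variational principle in which localisation plays the role of the missing scaling invariance.
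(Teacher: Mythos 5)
Your overall scheme---contradiction plus compactness in the gaussian parameters $A_j$---is genuinely different from the paper's argument, but it has a gap at exactly the point you identify as ``the main obstacle,'' and in fact the problem surfaces even earlier, in your case (i). The localised functional $\prod_j(\det A_j)^{p_j/2}/\det(M+\id)^{1/2}$ is \emph{not} invariant under the common rescaling $A_j\mapsto\lambda A_j$ (the $\id$ term breaks the homogeneity that \eqref{scaling} would otherwise supply), so normalising $\max_j\|A_j^{(k)}\|=1$ changes the value of the ratio. If the normalising factors $\lambda_k\to\infty$, the rescaled problem degenerates to the \emph{global} gaussian ratio at $\linearfamily^0$, whose finiteness is not part of the hypothesis; so a positive-definite limit $A_j^\infty$ does not contradict $\mathrm{BL}_{\loc}(\linearfamily^0,\mathbf{p})<\infty$. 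More seriously, a single normalisation cannot capture the generic blow-up mode, in which different eigendirections of the $A_j^{(k)}$ diverge and collapse at different rates simultaneously; extracting from such a multi-scale degeneration a single subspace $V$ violating the relevant finiteness condition (which for $\mathrm{BL}_{\loc}$ is the codimension condition \eqref{codimensionloc}, not \eqref{char}) is essentially equivalent to re-proving the necessity direction of the BCCT characterisation uniformly along a varying sequence of data, and you do not supply this step. As written, the proposal reduces the theorem to an unproven claim of comparable depth.

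For contrast, the paper avoids compactness in the $A_j$ altogether. It dominates the ball by a gaussian weight so that Lieb's theorem applies, reducing matters to the uniform determinant inequality \eqref{e:afterlieb}, and then applies compactness only where it is harmless: over the compact set of orthonormal bases $\mathbf{e}\in\mathcal{E}_n$ and maps $\linearfamily$ in a compact neighbourhood of $\linearfamily^0$ (Lemma \ref{loc51}), yielding a uniform lower bound $|\bigwedge_{i\in I_j}\linearmapj e_i|\geq c$ for index sets $I_j$ satisfying the combinatorial condition \eqref{loc35b}. Feeding in the eigenbasis of $M+\id$ and telescoping over the ordered eigenvalues $\mu_1\geq\cdots\geq\mu_n>1$ then handles all degeneration modes of the $A_j$ at once, with the lower bound $\mu_i>1$ doing precisely the work that the missing scale-invariance would otherwise do. If you want to salvage your approach, you would need a quantitative substitute for your case (ii)---and the multi-directional eigenvalue bookkeeping you would be forced into is, in effect, the paper's telescoping argument.
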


In \cite{BCCT1} it is shown that $\mathrm{BL}_{\loc}(\linearfamily^0,\mathbf{p})$ is finite if and only if
\begin{equation}\label{codimensionloc}
\codim_H(V)
\geq
\sum_{j=1}^m p_j \codim_{H_j}(\linearmapj^0 V) \quad
\text{ for all subspaces $V\subset H$.}
\end{equation}

Note that Theorem \ref{mainthm} is a direct corollary of Theorem \ref{localisedthm} in the case where the condition \eqref{scaling} is satisfied by a scaling argument.

Our proof of Theorem \ref{localisedthm} amounts to an appropriately uniform version of the proof of the finiteness characterisation theorem for the Gaussian localised version in \cite{BCCT1}. The advantage of this approach over the alternative in \cite{BCCT2} is that it avoids reference to critical subspaces, objects whose existence is unstable under perturbations of $\linearfamily$. Our argument fails to yield a more quantitative statement, such as something closer to upper semi-continuity for the Brascamp--Lieb constant, due to the crucial role played by the compactness of appropriately nondegenerate bases for $H$.

As in the proof of the openness of $F(\mathbf{p})$ given in Section \ref{opennesssection}, we shall exploit the finiteness condition \eqref{codimensionloc} through the consideration of an appropriate set of bases of $H$. The key tool is a uniform version of Lemma 5.1 from \cite{BCCT1}.

\begin{lemma}\label{loc51} Suppose  $(\linearfamily^0,\mathbf{p})$ is a Brascamp--Lieb datum such that  $\emph{BL}_{\loc}(\linearfamily^0,\mathbf{p})<\infty$.
Then there exist real numbers $c,\delta > 0$ such that for every $\mathbf{e}\in \mathcal{E}_n$ and every $\linearfamily$ satisfying $\|\linearfamily-\linearfamily^0\|\leq\delta$, there exists a set $I_j \subseteq \{1,\ldots,n\}$ with $|I_j| = \dim(H_j)$ for each $1\leq j\leq m$, such that
\begin{equation}\label{loc35b}
\sum_{j=1}^m p_j |I_j \cap \{1,\ldots,k\}| \leq k \hbox{ for all } 1 \leq k \leq n,
\end{equation}
and
\begin{equation}\label{sizebloc}
\bigg| \bigwedge_{i \in I_j} \linearmapj  e_i \bigg| \geq c\hbox{ for all } 1 \leq j \leq m.
\end{equation}
\end{lemma}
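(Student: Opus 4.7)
The plan is to reduce the uniform statement to its non-uniform analogue, \cite[Lemma 5.1]{BCCT1} in its localised form appropriate to \eqref{codimensionloc}, by means of a compactness argument in the spirit of Section \ref{opennesssection}. The crucial structural observation is that only \emph{finitely many} candidate index-tuples $\mathbf{I}=(I_1,\ldots,I_m)$ can appear, so the index selection is a discrete choice; the only analytical content that must be made uniform is the wedge-product lower bound \eqref{sizebloc}, and this is a question of continuity.

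Write $\mathcal{I}$ for the finite collection of tuples $\mathbf{I}=(I_1,\ldots,I_m)$ with $I_j\subseteq\{1,\ldots,n\}$, $|I_j|=n_j$, satisfying the purely combinatorial inequality \eqref{loc35b}. For every $\mathbf{I}\in\mathcal{I}$ and every $c>0$ the set
$$
A_{\mathbf{I},c}:=\left\{(\linearfamily,\mathbf{e})\in\alllinearmaps\times\mathcal{E}_n:\Bigl|\bigwedge_{i\in I_j}\linearmapj e_i\Bigr|>c\text{ for each }1\leq j\leq m\right\}
$$
is open, by continuity of the wedge product in both $\linearfamily$ and $\mathbf{e}$.

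Applying the non-uniform statement (the local version of \cite[Lemma 5.1]{BCCT1}) to $\linearfamily^0$, whose finiteness hypothesis $\mbox{BL}_{\loc}(\linearfamily^0,\mathbf{p})<\infty$ is equivalent to \eqref{codimensionloc}, I obtain for each fixed $\mathbf{e}\in\mathcal{E}_n$ some $\mathbf{I}(\mathbf{e})\in\mathcal{I}$ with $\bigwedge_{i\in I_j(\mathbf{e})}\linearmapj^0 e_i\neq 0$ for every $j$; equivalently, $(\linearfamily^0,\mathbf{e})\in A_{\mathbf{I}(\mathbf{e}),2c(\mathbf{e})}$ for some $c(\mathbf{e})>0$. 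Openness of $A_{\mathbf{I}(\mathbf{e}),c(\mathbf{e})}$ then supplies a product neighbourhood $\{\|\linearfamily-\linearfamily^0\|<\delta(\mathbf{e})\}\times U(\mathbf{e})$ of $(\linearfamily^0,\mathbf{e})$ contained in it. By compactness of $\mathcal{E}_n$ I extract a finite subcover $\{U(\mathbf{e}^{(\ell)})\}_{\ell=1}^{N}$, and taking
$$
\delta:=\min_{1\leq \ell\leq N}\delta(\mathbf{e}^{(\ell)}),\qquad c:=\min_{1\leq \ell\leq N}c(\mathbf{e}^{(\ell)}),
$$
together with the index selection $I_j:=I_j(\mathbf{e}^{(\ell)})$ for any $\ell$ with $\mathbf{e}\in U(\mathbf{e}^{(\ell)})$, delivers the lemma.

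The main obstacle is the non-uniform step: producing a tuple $\mathbf{I}(\mathbf{e})\in\mathcal{I}$ for each fixed basis $\mathbf{e}$ starting from the codimension condition \eqref{codimensionloc}. This is a Hall/matroid-intersection statement and requires adapting the argument of \cite[Lemma 5.1]{BCCT1} from the dimension condition \eqref{char} (with the scaling \emph{equality} \eqref{scaling}) to its codimension counterpart (where the corresponding sum is merely bounded by $n$). Once that non-uniform selection has been established, the compactness/continuity argument above is routine, and indeed is entirely parallel to the openness argument in Section \ref{opennesssection}.
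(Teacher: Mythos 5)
Your compactness/continuity reduction is sound and is in substance identical to the paper's own argument: the paper packages it via the function $h(\linearfamily,\mathbf{e})=\max_{\mathbf{I}\in\mathcal{I}}\min_j|\bigwedge_{i\in I_j}\linearmapj e_i|$, its continuity, and uniform continuity on $\mathcal{K}\times\mathcal{E}_n$ for a compact neighbourhood $\mathcal{K}$ of $\linearfamily^0$, which is just another way of phrasing your open sets $A_{\mathbf{I},c}$ and finite subcover. So the architecture is right.

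The problem is the step you yourself label ``the main obstacle'': you never actually produce, for a fixed $\mathbf{e}\in\mathcal{E}_n$, a tuple $\mathbf{I}(\mathbf{e})\in\mathcal{I}$ with $\bigwedge_{i\in I_j(\mathbf{e})}\linearmapj^0 e_i\neq 0$ for all $j$, deferring it to an ``adaptation'' of \cite[Lemma 5.1]{BCCT1} that you do not carry out. Since this selection is the entire substance of the lemma (everything else is soft), the proof as written is incomplete. No Hall-type or matroid-intersection machinery is needed; the paper does it with a backwards greedy algorithm. Fix $j$, set $i_0=\max\{i:\linearmapj^0 e_i\neq 0\}$, put $i_0\in I_j$, and then, descending through $i<i_0$, put $i\in I_j$ precisely when $\linearmapj^0 e_i\notin\langle \linearmapj^0 e_{i'}: i<i'\leq n\rangle$. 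Surjectivity of $\linearmapj^0$ gives $|I_j|=\dim H_j$ and the wedge is nonzero by construction. The point of choosing the algorithm \emph{backwards} is that for $V=\langle e_{k+1},\ldots,e_n\rangle$ one then has $\dim(\linearmapj^0 V)=|I_j\cap\{k+1,\ldots,n\}|$, hence $\codim_{H_j}(\linearmapj^0 V)=|I_j\cap\{1,\ldots,k\}|$, and \eqref{codimensionloc} applied to this $V$ is exactly \eqref{loc35b}. With that paragraph inserted, your argument closes (modulo the trivial adjustment from an open to a closed $\delta$-ball).
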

In the above lemma, and throughout, we identify $\bigwedge_{i \in I_j} \linearmapj  e_i$ with a real number via Hodge duality.
\begin{proof}
Let $\mathcal{I}$ denote the set of all $m$-tuples $(I_1,\hdots,I_m)$ of subsets of $\{1,\hdots, n\}$ satisfying $|I_j| = \dim(H_j)$ and \eqref{loc35b}. Define 
$$
h(\linearfamily,\mathbf{e})=\max_{(I_1,\hdots,I_m)\in\mathcal{I}}\min_{1\leq j\leq m}\bigg|\bigwedge_{i\in I_j}\linearmapj e_i\bigg|.
$$
We begin by proving that
$h(\linearfamily^0,\mathbf{e})\geq c' $ for all $\mathbf{e}\in\mathcal{E}_n$ and some $c'>0$. By the continuity of $h$ and the compactness of $\mathcal{E}_n$, it is enough to verify that $h(\linearfamily^0,\mathbf{e})\neq 0$ for all $\mathbf{e}\in\mathcal{E}_n$. From the definition of $h$, it suffices to show that there exists $(I_1,\hdots,I_m)\in\mathcal{I}$ for which
\begin{equation}\label{want1loc}\bigwedge_{i \in I_j} \linearmapj^0 e_i  \neq 0 \hbox{ for all } 1 \leq j \leq m.
\end{equation}
Proceeding as in \cite{BCCT1}, we fix $j$ and select $I_j$ by a backwards greedy algorithm, firstly by putting $i_0$ in $I_j$, where
\[
i_0 = \max\{ i \in \{1,\ldots,n\} : L_j^0e_i \neq 0\}
\]
and then choosing indices $i \in \{1,\ldots,i_0-1\}$ for which $\linearmapj^0 e_i$ is not in the linear span of $\{ \linearmapj^0 e_{i'}: i < i' \leq n\}$. By construction \eqref{want1loc} holds, and since $\linearmapj^0$ is surjective, $|I_j|=\dim (H_j)$.

To prove \eqref{loc35b}, we apply the codimension condition
\eqref{codimensionloc} with $V$ equal to the span of $\{e_{k+1},\ldots,e_n\}$, to obtain
$$ \sum_j p_j \dim(H_j / \linearmapj^0 V) \leq k.$$
By construction of $I_j$,  $\dim(\linearmapj^0 V) = |I_j \cap \{k+1,\ldots,n\}|$
and hence $\dim(H_j / \linearmapj^0 V) = |I_j \cap \{1,\ldots,k\}|$.

Now let $\mathcal{K}$ be a compact set of linear maps with $\linearfamily^0$ belonging to its interior.  Since the function $h$ is uniformly continuous on the compact set $\mathcal{K}\times\mathcal{E}_n$, there exists $\delta>0$ such that
$$
|h(\linearfamily,\mathbf{e})-h(\linearfamily^0,\mathbf{e})|\leq \frac{c'}{2}
$$
and $\linearfamily\in\mathcal{K}$ whenever $\mathbf{e}\in\mathcal{E}_n$ and $\|\linearfamily-\linearfamily^0\|\leq\delta$. Therefore,
$h(\linearfamily,\mathbf{e})\geq\frac{c'}{2}$ whenever $\mathbf{e}\in\mathcal{E}_n$ and $\|\linearfamily-\linearfamily^0\|\leq\delta$.
The lemma now follows from the definition of $h$.
\end{proof}

\begin{proof}[Proof of Theorem \ref{localisedthm}]
We assume, as we may, that $p_j>0$ and $H_j\neq\{0\}$ for each $j$. Let  $c$ and  $ \delta$ be those given by Lemma \ref{loc51}. We emphasise that these quantities depend only on the fixed datum $(\linearfamily^0,\mathbf{p})$. To further emphasise uniformity we include the explicit constant factors arising in the remainder of the argument.

The constant $\mbox{BL}_{\loc}(\linearfamily,\mathbf{p})$ 
is bounded above by a fixed multiple of the best constant in the Gaussian localised case,
\[
 \int_{H} e^{-\pi|\cdot|^2} \prod_{j=1}^m (f_j \circ \linearmapj )^{p_j}
 \leq C \prod_{j=1}^m \Bigl(\int_{H_j} f_j\Bigr)^{p_j}.
\]
By an application of Lieb's Theorem (Theorem 6.2 in \cite{L}), we have
$$
\mbox{BL}_{\loc}(\linearfamily,\mathbf{p})\leq C\sup\;\frac{\prod_{j=1}^m(\det A_j)^{p_j/2}}{\det(M+\id)^{1/2}},
$$
where $M=\sum_{j=1}^m p_j\linearmapj^*A_j\linearmapj$, $\id$ is the identity matrix, and the supremum is taken over all positive definite $A_j:H_j\rightarrow H_j$, $1\leq j\leq m$.
It will thus suffice to prove that there exists a constant $C>0$ such that
\begin{equation} \label{e:afterlieb}
\prod_{j=1}^m (\det A_j)^{p_j} \leq C \det (M+\id)
\end{equation}
for all data $(\linearfamily,\mathbf{p})$ such that $\|\linearfamily-\linearfamily^0\|\leq\delta$ and all such positive definite $A_j$.

Since $p_j>0$ and $\bigcap_{j=1}^m \ker{\linearmapj}=\{0\}$, we have that $M$ and $M+\id$ are positive definite. Let $e_1,\hdots,e_n$ be an orthonormal basis of eigenvectors for $M+\id$, ordered so that their corresponding eigenvalues satisfy $\mu_1 \geq \ldots \geq \mu_n > 1$.

For each $1 \leq i \leq n$ we have that
$$ \langle e_i, Me_i \rangle_{H}=\mu_i-\langle e_i, e_i \rangle_{H}\leq \mu_i,
$$
and so for each $1 \leq i \leq n$ and $1 \leq j \leq m$,
$$ \langle A_j \linearmapj e_i, \linearmapj e_i \rangle_{H_j} = \langle e_i, \linearmapj^* A_j \linearmapj e_i \rangle_{H} \leq \frac{1}{p_j} \langle e_i, Me_i \rangle_{H} \leq  \mu_i / p_j.$$

Applying Lemma \ref{loc51}, for each $1 \leq j \leq m$, there exists $I_j \subseteq \{1,\ldots,n\}$  of cardinality $|I_j| = \dim(H_j)$
 such that \eqref{loc35b} and \eqref{sizebloc} hold.
For fixed $1 \leq j \leq m$, if we consider $\linearmapj^*A_j\linearmapj$ acting on the subspace spanned by $\{ e_i : i \in I_j\}$, then, since the determinant of a positive semi-definite transformation is at most the product of its diagonal entries,
$$
\det(A_j) \leq \bigg|  \bigwedge_{i \in I_j} \linearmapj e_i \bigg|^{-2}  \prod_{i \in I_j} \langle \linearmapj^*A_j\linearmapj  e_i,  e_i \rangle_{H}.
$$
Thus
$$
\det(A_j) \leq \left(c^2p_j^{n_j}\right)^{-1} \prod_{i \in I_j} \mu_i,
$$
where $c>0$ is the constant given by \eqref{sizebloc}, and this implies
$$
\prod_{j=1}^m (\det A_j)^{p_j} \leq\bigg(c^{2\sum_{j=1}^mp_j}\prod_{j=1}^m p_j^{p_jn_j}\bigg)^{-1} \prod_{i=1}^n \mu_i^{a_i},
$$
where $a_i := \sum_{j=1}^m p_j |I_j \cap \{i\}|$. By telescoping we may write
$$
\prod_{i=1}^n \mu_i^{a_i} = \det(M+\id)\prod_{k=1}^n \left(\frac{\mu_{k+1}}{\mu_k}\right)^{k - \sum_{i = 1}^k a_i}
$$
since $\det(M+\id) = \prod_{i=1}^n \mu_i$, and where we have defined $\mu_{n+1} := 1$.
Applying \eqref{loc35b}, $k - \sum_{i = 1}^k a_i \geq 0$ and, by construction, $\frac{\mu_{k+1}}{\mu_k} \leq 1$ for all $1 \leq k \leq n$. Hence \eqref{e:afterlieb} holds with constant $C = (c^{2\sum_{j=1}^mp_j}\prod_{j=1}^m p_j^{p_jn_j})^{-1}.$
\end{proof}

\subsection{Local boundedness for partially localised data }
In this section we prove a generalisation of Theorem \ref{mainthm} for partially localised Brascamp--Lieb constants (see \cite{L} and more recently \cite{BCCT2}).

Let $(\linearfamily,\mathbf{p})$ be a Brascamp--Lieb datum,  let $H_0\subseteq H$ be a subspace of $H$, and let $G$ be a positive semi-definite linear map whose kernel is $H_0$. The associated partially localised Brascamp--Lieb inequality is
\begin{eqnarray}
 \int_{\{x\in H: |\langle Gx,x \rangle|<1\}}  \prod_{j=1}^m (f_j \circ \linearmapj)^{p_j}
 \leq C \prod_{j=1}^m \bigg(\int_{H_j} f_j \bigg)^{p_j}.
\end{eqnarray}
Denote the best constant in the above inequality by $\mbox{BL}_G(\linearfamily,\mathbf{p})$ (not to be confused with $\mbox{BL}_{\bf g}$ from \cite{BCCT1}).

In \cite{BCCT2} it is shown that $\mbox{BL}_G(\linearfamily,\mathbf{p})$ is finite if and only if
\begin{equation}\label{dimensionpl}
\dim(V)\leq \sum_{j=1}^m p_j  \dim(\linearmapj V) \quad
\text{ for all subspaces $V\subseteq H_0$}
\end{equation}
and
\begin{equation}\label{codimensionpl}
\codim_H(V)
\geq
\sum_{j=1}^m p_j \codim_{H_j}(\linearmapj V) \quad
\text{ for all subspaces $V\subset H$.}
\end{equation}

It is tempting to believe that the partially localised case should follow easily from the localised case, Theorem \ref{localisedthm}, by a scaling argument as Theorem \ref{mainthm} does.  However, the scaling argument in the partially localised setting requires an anisotropic dilation, which changes the initial Brascamp--Lieb datum nontrivially. Nevertheless, a version of Theorem \ref{mainthm} holds in this case as well. Again our proof is an appropriately uniform version of the finiteness characterisation in \cite{BCCT1} combining the methods from the fully-local and fully-global cases. This gives a proof of the characterisation of finiteness for partially localised data which does not require factoring through critical subspaces as in \cite{BCCT2}.
\begin{theorem}\label{partiallylocalisedthm}
Suppose that $(\linearfamily^0,\mathbf{p})$ is a Brascamp--Lieb datum and $G$ is a positive semi-definite linear map such that $\emph{BL}_G(\linearfamily^0,\mathbf{p})<\infty$. Then there exists $\delta>0$ and a constant $C<\infty$ such that
$$\mbox{\emph{BL}}_G(\linearfamily,\mathbf{p})\leq C$$
whenever $\|\linearfamily-\linearfamily^0\|<\delta$.
\end{theorem}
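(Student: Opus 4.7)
The plan is to adapt the three-step proof of Theorem \ref{localisedthm} — Gaussian reduction via Lieb's theorem, a uniform basis-selection lemma, and an eigenvalue telescoping — to the partially localised setting. The novelty is to combine the ``local'' bookkeeping of Lemma \ref{loc51} with a ``global'' bookkeeping on the degenerate subspace $H_0 = \ker G$, in the spirit of the comment that the argument should combine methods from the fully-local and fully-global cases.

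\textbf{Step 1: Gaussian reduction.} As in the fully localised setting, the partially localised constant $\mathrm{BL}_G(\linearfamily,\mathbf{p})$ is dominated by a fixed multiple of the sharp constant in the Gaussian variant, leading via Lieb's theorem to the task of proving
\begin{equation}\label{plmain}
\prod_{j=1}^m (\det A_j)^{p_j} \leq C \det(M + G)
\end{equation}
uniformly in $\|\linearfamily - \linearfamily^0\| \leq \delta$ and in positive definite $A_j$. Because $G$ may be degenerate, the weight $e^{-\pi\langle Gx,x\rangle}$ is not integrable on $H$; this is handled by a preliminary Fubini computation on $H = H_0 \oplus H_0^\perp$, integrating out the $H_0$-directions (legal because the dimension condition \eqref{dimensionpl} forces $M|_{H_0}$ to be positive definite) and identifying the remaining Gaussian via the Schur complement of $M+G$ with respect to $H_0$. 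An approximation $G \mapsto G + \varepsilon P_{H_0}$ followed by $\varepsilon \to 0^+$ gives an alternative route to the same reduction.

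\textbf{Step 2: Hybrid uniform basis lemma.} I propose a hybrid of Lemma \ref{loc51} and its global counterpart. The relevant compact set of bases is the subset of $\mathcal{E}_n$ of orthonormal bases $\mathbf{e}$ whose first $d_0 := \dim H_0$ members span $H_0$. For each such $\mathbf{e}$ and every $\linearfamily$ close enough to $\linearfamily^0$, the lemma produces $I_j \subseteq \{1,\ldots,n\}$ with $|I_j| = n_j$ and $|\bigwedge_{i \in I_j} L_j e_i| \geq c$ uniformly, satisfying
\begin{equation*}
\sum_{j=1}^m p_j \,|I_j \cap \{1,\ldots,k\}| \geq k \text{ for } 1 \leq k \leq d_0, \quad \sum_{j=1}^m p_j \,|I_j \cap \{1,\ldots,k\}| \leq k \text{ for } d_0 \leq k \leq n.
\end{equation*}
The first family comes from applying \eqref{dimensionpl} to $V_k = \mathrm{span}(e_1,\ldots,e_k) \subseteq H_0$; the second from applying \eqref{codimensionpl} to $V_k^\perp = \mathrm{span}(e_{k+1},\ldots,e_n)$. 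The $I_j$ are chosen by a hybrid greedy algorithm (forwards on $\{1,\ldots,d_0\}$ and backwards on $\{d_0+1,\ldots,n\}$) in the style of \cite{BCCT1}, and uniformity in $\linearfamily$ is obtained by the same continuity/compactness argument as in Lemma \ref{loc51}. Crucially, at $k = d_0$ the two inequalities coincide and force the equality $\sum_j p_j\, |I_j \cap \{1,\ldots,d_0\}| = d_0$, which will provide a clean seam for the telescoping.

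\textbf{Step 3: Block telescoping.} Choose the basis so that $e_1,\ldots,e_{d_0}$ diagonalises $M|_{H_0}$ with eigenvalues $\mu_1 \geq \cdots \geq \mu_{d_0}$, and $e_{d_0+1},\ldots,e_n$ diagonalises the Schur complement of $M+G$ with respect to $H_0$, with eigenvalues $\mu_{d_0+1} \geq \cdots \geq \mu_n$; then $\det(M+G) = \prod_i \mu_i$ by the Schur identity. A volume-preserving block-triangular change of coordinates on $H_0^\perp$ realises this Schur complement as a genuine diagonal block of the transformed $M+G$, changing $L_j$ only by a uniformly controlled shear that does not spoil the conclusions of Step 2. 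After this change the bound $\langle A_j L_j e_i, L_j e_i\rangle \leq \mu_i/p_j$ holds for every $i$, and hence $\det A_j \leq c^{-2}\prod_{i \in I_j} \mu_i/p_j$, exactly as in the proof of Theorem \ref{localisedthm}. Telescoping with $\mu_{n+1} := 1$ then gives $\prod_i \mu_i^{a_i} = \det(M+G)\prod_k (\mu_{k+1}/\mu_k)^{k - A_k}$ with $a_i = \sum_j p_j\,|I_j \cap \{i\}|$; on the range $d_0 \leq k \leq n$ the codimension-type inequality and the decreasing $\mu$'s within the $H_0^\perp$-block make each factor $\leq 1$, and the seam $k = d_0$ contributes $1$ by equality, delivering \eqref{plmain} after chaining with a direct treatment of the $H_0$-block via the global Brascamp--Lieb estimate on $H_0$ (whose local boundedness at $\linearfamily^0|_{H_0}$ is Theorem \ref{mainthm}).

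\textbf{Main obstacle.} The most delicate part is the $H_0$-block, where the dimension condition provides the ``wrong-sign'' inequality $c_k = k - A_k \leq 0$ that cannot be absorbed into the product $\prod (\mu_{k+1}/\mu_k)^{c_k}$. Instead of insisting on a single telescoping identity across all of $\{1,\ldots,n\}$, one should handle the $H_0$-block by a direct appeal to Theorem \ref{mainthm} (global stability on $H_0$) and splice it with the local telescoping on $H_0^\perp$ through the seam equality at $k=d_0$. The compatibility of the two estimates across this seam, and their joint matching with $\det(M+G)$ (not just with the Fischer-loose upper bound $\det(M_{00})\det(M_{\perp\perp}+G|_{H_0^\perp})$), is the technical crux, and is what replaces the critical-subspace factoring used in \cite{BCCT2} — an ingredient that is unavailable here because the existence of critical subspaces is itself unstable under perturbation of $\linearfamily$.
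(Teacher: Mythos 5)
Your architecture (Lieb reduction, a hybrid basis lemma exploiting both \eqref{dimensionpl} and \eqref{codimensionpl}, and an eigenvalue telescoping) matches the paper's, but there is a genuine gap exactly where you locate your ``main obstacle'', and the device you propose to fill it does not work. You split the basis statically at $k=d_0:=\dim H_0$, placing an eigenbasis of $M|_{H_0}$ first and a Schur-complement block second, and then concede that on the $H_0$-block the dimension condition produces wrong-sign exponents $k-A_k\leq 0$ which cannot be absorbed into $\prod(\mu_{k+1}/\mu_k)^{k-A_k}$; your rescue is ``a direct appeal to Theorem \ref{mainthm} (global stability on $H_0$)''. That appeal is unavailable: it presupposes that the restricted datum $((\linearmapj|_{H_0})_j,\mathbf{p})$ is a finite Brascamp--Lieb datum on $H_0$, i.e.\ the scaling identity $\sum_j p_j\dim(\linearmapj H_0)=d_0$, which is precisely the assertion that $H_0$ is a critical subspace --- not a hypothesis here, and exactly the factoring the paper is structured to avoid because criticality is unstable under perturbation of $\linearfamily$. (For the same reason your claimed ``seam equality'' at $k=d_0$ fails in general: no scaling condition is available in the partially localised setting.) Moreover the eigenvalues of $M|_{H_0}$ need not be small, so there is no anchor for the wrong-sign block.

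The paper's resolution is to split \emph{adaptively by eigenvalue size rather than by subspace}. Take the orthonormal eigenbasis of $M+G$ with $\mu_1\geq\cdots\geq\mu_n$, fix a small threshold $\gamma$, and set $\ell=\min\{i:\mu_i<\gamma\}$ (if no eigenvalue lies below $\gamma$ the purely localised argument applies after rescaling by $\gamma$). Since one may take $G$ to be the orthogonal projection onto $H_0^{\perp}$ and $\langle Ge_i,e_i\rangle\leq\mu_i<\gamma$ for $i\geq\ell$, those eigenvectors lie within $\gamma^{1/2}$ of $H_0$; replacing them by $v_i=e_i-Ge_i\in H_0$ yields a nearly orthonormal basis whose tail genuinely lies in $H_0$, to which a compactness lemma over the classes $\mathcal{V}_{\alpha,\ell}$ applies, and the wedge lower bound transfers back to the $e_i$ at the cost of a factor $2$ by choosing $\gamma$ small. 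The exponent signs then come out right on both sides of the eigenvalue-determined seam: on the head, the codimension-type inequality \eqref{35b} combines with $\mu_k\geq\gamma$ as in the localised proof; on the tail, the dimension-type inequality \eqref{35ker} gives $a_{\geq i+1}\geq n-i$, and since $\mu_{i+1}/\mu_i\leq 1$ and $\mu_\ell<1$ this yields $\prod_{i\geq\ell}\mu_i^{a_i}\leq\prod_{i\geq\ell}\mu_i$ with no appeal to any restricted Brascamp--Lieb inequality. The adaptive split and the quantitative perturbation of the small eigenvectors into $H_0$ are the ideas missing from your proposal.
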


As in the proof of Theorem \ref{localisedthm}, we shall exploit the conditions \eqref{dimensionpl} and \eqref{codimensionpl} through the consideration of an appropriate set of bases of $H$. However, rather than using orthonormal bases, it will be important to use classes which have some alignment with the distinguished subspace $\ker G = H_0$, and to permit bases which are not quite orthonormal.

For $0<\alpha\leq 1$ let $\mathcal{V}_\alpha$ denote the set of all $\mathbf{v}=(v_{1},\hdots,v_n)\in H^n$ such that
$\|v_i\|\leq1 $ for all $1\leq i \leq n$, and
$$
\bigg|  \bigwedge_{i=1}^n  v_i \bigg| \geq \alpha.
$$
Further, for each $\ell\in\mathbb{N}$ with $n-\dim H_0\leq\ell< n$, let
$$
\mathcal{V}_{\alpha,\ell}:=\{\mathbf{v}\in\mathcal{V}_\alpha:v_{\ell+1},\cdots,v_n\in H_0\},
$$
and $\mathcal{V}_{\alpha,n}:=\mathcal{V}_\alpha$. We thus interpret an element $\mathbf{v}$ of $\mathcal{V}_{\alpha,\ell}$ as a certain (ordered) basis for $H$ with a lower bound on its degeneracy.

Clearly $\mathcal{V}_{\alpha,\ell}\subseteq\mathcal{V}_{\alpha,\ell+1}$ for each $\ell$. Note also that $\mathcal{V}_{\alpha,\ell}$ is compact for each $\alpha$ and $\ell$.

\begin{lemma}\label{new51} Suppose that $n-\dim H_0\leq\ell\leq n$ and $\alpha\in (0,1]$, and that $(\linearfamily^0,\mathbf{p})$ is a Brascamp--Lieb datum for which $\emph{BL}_G(\linearfamily^0,\mathbf{p})<\infty$. Then there exist real numbers $c_\ell,\delta_\ell > 0$ such that for every $\mathbf{v}\in \mathcal{V}_{\alpha,\ell}$ and every $\linearfamily$ satisfying $\|\linearfamily-\linearfamily^0\|\leq\delta_\ell$, there exists a set $I_j \subseteq \{1,\ldots,n\}$ with $|I_j| = \dim(H_j)$ for each $1\leq j\leq m$, such that
\begin{equation}\label{35b}
\sum_{j=1}^m p_j |I_j \cap \{1,\ldots,k\}| \leq k \hbox{ for all } 0 \leq k \leq n,
\end{equation}
\begin{equation}\label{35ker}
\sum_{j=1}^m p_j |I_j \cap \{k+1,\ldots,n\}| \geq n - k \hbox{ for all } \ell \leq k \leq n,
\end{equation}
and
\begin{equation}\label{sizeb}
\bigg| \bigwedge_{i \in I_j} \linearmapj v_i \bigg| \geq c_\ell \hbox{ for all } 1 \leq j \leq m.
\end{equation}
\end{lemma}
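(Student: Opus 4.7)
The plan is to adapt the proof of Lemma \ref{loc51}, working with the compact set $\mathcal{V}_{\alpha,\ell}$ in place of $\mathcal{E}_n$ and using both \eqref{codimensionpl} (to secure \eqref{35b}) and \eqref{dimensionpl} (to secure \eqref{35ker}). I will introduce a continuous max-min functional $h_\ell(\linearfamily,\mathbf{v})$, show $h_\ell(\linearfamily^0,\cdot)>0$ on $\mathcal{V}_{\alpha,\ell}$ via a backwards greedy selection of the $I_j$, and then promote this to a uniform lower bound on a small $\linearfamily$-neighbourhood of $\linearfamily^0$ by continuity and compactness.

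Concretely, let $\mathcal{I}_\ell$ be the set of $m$-tuples $(I_1,\ldots,I_m)$ of subsets of $\{1,\ldots,n\}$ with $|I_j|=\dim(H_j)$ satisfying both \eqref{35b} and \eqref{35ker}, and define
$$
h_\ell(\linearfamily,\mathbf{v}):=\max_{(I_1,\ldots,I_m)\in\mathcal{I}_\ell}\min_{1\leq j\leq m}\bigg|\bigwedge_{i\in I_j}\linearmapj v_i\bigg|,
$$
which is jointly continuous in $(\linearfamily,\mathbf{v})$. For fixed $\mathbf{v}\in\mathcal{V}_{\alpha,\ell}$, the lower bound $|\bigwedge_{i=1}^n v_i|\geq\alpha>0$ makes $v_1,\ldots,v_n$ linearly independent. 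For each $j$, I would then run the backwards greedy algorithm of \cite{BCCT1}: place $i\in I_j$ precisely when $\linearmapj^0 v_i\notin\langle \linearmapj^0 v_{i'}:i'>i\rangle$. By construction, $\bigwedge_{i\in I_j}\linearmapj^0 v_i\neq 0$; the surjectivity of $\linearmapj^0$ together with $\mathrm{span}\{v_1,\ldots,v_n\}=H$ gives $|I_j|=\dim(H_j)$; and $|I_j\cap\{k+1,\ldots,n\}|=\dim(\linearmapj^0 V_k)$, where $V_k:=\langle v_{k+1},\ldots,v_n\rangle$.

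The combinatorial heart of the argument is that this single greedy choice satisfies both \eqref{35b} and \eqref{35ker}. Applying \eqref{codimensionpl} to $V_k$ yields
$$
\sum_{j=1}^m p_j|I_j\cap\{1,\ldots,k\}|=\sum_{j=1}^m p_j\codim_{H_j}(\linearmapj^0 V_k)\leq\codim_H(V_k)=k,
$$
which is \eqref{35b}. When $\ell\leq k\leq n$, the vectors $v_{k+1},\ldots,v_n$ all lie in $H_0$ by definition of $\mathcal{V}_{\alpha,\ell}$, so $V_k\subseteq H_0$; applying \eqref{dimensionpl} to this $V_k$ then gives
$$
n-k=\dim(V_k)\leq\sum_{j=1}^m p_j\dim(\linearmapj^0 V_k)=\sum_{j=1}^m p_j|I_j\cap\{k+1,\ldots,n\}|,
$$
which is \eqref{35ker}. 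Hence $(I_1,\ldots,I_m)\in\mathcal{I}_\ell$ and $h_\ell(\linearfamily^0,\mathbf{v})>0$. Compactness of $\mathcal{V}_{\alpha,\ell}$ together with continuity of $h_\ell$ upgrades this to $h_\ell(\linearfamily^0,\cdot)\geq c'$ on $\mathcal{V}_{\alpha,\ell}$ for some $c'>0$.

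The closing step, mirroring the end of the proof of Lemma \ref{loc51}, is to fix a compact neighbourhood $\mathcal{K}$ of $\linearfamily^0$ and use uniform continuity of $h_\ell$ on $\mathcal{K}\times\mathcal{V}_{\alpha,\ell}$ to find $\delta_\ell>0$ with $|h_\ell(\linearfamily,\mathbf{v})-h_\ell(\linearfamily^0,\mathbf{v})|\leq c'/2$ whenever $\|\linearfamily-\linearfamily^0\|\leq\delta_\ell$; then $h_\ell(\linearfamily,\mathbf{v})\geq c'/2=:c_\ell$, and unwinding the definition of $h_\ell$ produces the desired $I_j$ satisfying \eqref{35b}, \eqref{35ker} and \eqref{sizeb}. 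The main (essentially sole) obstacle is the observation that the backwards greedy construction encodes both the codimension of $\linearmapj^0 V_k$ (valid for every $k$) and the dimension of $\linearmapj^0 V_k$ (valid once $V_k\subseteq H_0$, i.e. $k\geq\ell$), so that a single family of $I_j$ can meet both index-counting requirements simultaneously.
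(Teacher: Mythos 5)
Your proposal is correct and follows essentially the same route as the paper: the same max--min functional $h_\ell$ over the class $\mathcal{I}_\ell$, the same backwards greedy selection of the $I_j$ (with \eqref{codimensionpl} giving \eqref{35b} and \eqref{dimensionpl} applied to $V_k\subseteq H_0$ for $k\geq\ell$ giving \eqref{35ker}), and the same compactness/uniform-continuity step to pass to a neighbourhood of $\linearfamily^0$. Your explicit remark that $|\bigwedge_{i=1}^n v_i|\geq\alpha$ forces linear independence of the $v_i$ (so that $|I_j|=\dim(H_j)$ and $\codim_H(V_k)=k$) is a detail the paper leaves implicit.
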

\begin{proof}
Let $\mathcal{I}_\ell$ denote the set of all $m$-tuples $(I_1,\hdots,I_m)$ of subsets of $\{1,\hdots, n\}$ satisfying $|I_j| = \dim(H_j)$, \eqref{35b}, and \eqref{35ker}. Define 
$$
h_\ell(\linearfamily,\mathbf{v})=\max_{(I_1,\hdots,I_m)\in\mathcal{I}_\ell}\min_{1\leq j\leq m} \bigg| \bigwedge_{i\in I_j}\linearmapj v_i \bigg|.
$$
We begin by proving that
$h_\ell(\linearfamily^0,\mathbf{v})\geq c_{\ell}'$ for all $\mathbf{v}\in\mathcal{V}_{\alpha,\ell}$ and some $c_\ell'>0$. By the continuity of $h_\ell$ and the compactness of $\mathcal{V}_{\alpha,\ell}$, it is enough to verify that $h_\ell(\linearfamily^0,\mathbf{v})\neq 0$ for all $\mathbf{v}\in\mathcal{V}_{\alpha,\ell}$. From the definition of $h_\ell$ it suffices to show that there exists $(I_1,\hdots,I_m)\in\mathcal{I}_\ell$ for which
\begin{equation}\label{want1}
\bigwedge_{i \in I_j} \linearmapj^0 v_i  \neq 0 \hbox{ for all } 1 \leq j \leq m.
\end{equation}
Again, we select each $I_j$ by a backwards greedy algorithm, and \eqref{35b} follows as before.  
To prove \eqref{35ker}, we let $\ell \leq k \leq n$ and apply \eqref{dimensionpl}, which is a consequence of our hypothesis that $\mathrm{BL}_G(\linearfamily^0,\mathbf{p})<\infty$, with $V$ equal to the span of $\{v_{k+1},\ldots,v_n\} \subset H_0$ to obtain
$$
\sum_{j=1}^m p_j \dim(\linearmapj^0 V) \geq n-k.
$$
By construction of $I_j$,  we have $\dim(\linearmapj^0 V) = |I_j \cap \{k+1,\ldots,n\}|$.
Thus $(I_1,...,I_m)\in\mathcal{I}_\ell$ satisfies \eqref{want1}, as required.

Now let $\mathcal{K}$ be a compact set of linear maps which contains $\linearfamily^0$ in its interior. Since the function $h_\ell$ is uniformly continuous on the compact set $\mathcal{K}\times\mathcal{V}_{\ell,\alpha}$, there exists $\delta_\ell>0$ such that
$$
|h_\ell(\linearfamily,\mathbf{v})-h_\ell(\linearfamily^0,\mathbf{v})|\leq \frac{c_\ell'}{2}
$$
and $\linearfamily\in\mathcal{K}$ whenever $\mathbf{v}\in\mathcal{V}_{\alpha,\ell}$ and $\|\linearfamily-\linearfamily^0\|\leq\delta_\ell$. Therefore,
$h_\ell(\linearfamily,\mathbf{v})\geq\frac{c_\ell'}{2}$ whenever $\mathbf{v}\in\mathcal{V}_{\alpha,\ell}$ and $\|\linearfamily-\linearfamily^0\|\leq\delta_\ell$.
The lemma now follows from the definition of $h_\ell$. 
\end{proof}

\begin{proof}[Proof of Theorem \ref{partiallylocalisedthm}]
We assume, as we may, that $p_j>0$ and $H_j\neq\{0\}$ for each $j$. By applying a linear transformation we may also assume that $G$ is the orthogonal projection of $H$ onto $H_0^\perp$. We may also reduce to the case where $n\geq 2$ as when $n=1$, $G$ is either the identity or $0$. 

Fix $\alpha\in(0,1)$ and let
\begin{equation}\label{defc}
c:=\min_{\ell}c_\ell,\quad \delta:=\min_{\ell}\delta_\ell,
\end{equation}
where $c_\ell, \delta_\ell$ are those given by Lemma \ref{new51}. We emphasise that these quantities depend only on $H_0$,  $\alpha$,  and the fixed datum $(\linearfamily^0,\mathbf{p})$.

It will suffice to prove that there exists a constant $C>0$ such that
\begin{equation*}
 \int_{H} e^{-\pi\langle Gx,x\rangle} \prod_{j=1}^m (f_j \circ \linearmapj)^{p_j}
 \leq C \prod_{j=1}^m \bigg(\int_{H_j} f_j \bigg)^{p_j}
\end{equation*}
whenever $\|\linearfamily-\linearfamily^0\|<\delta$. By Lieb's Theorem (Theorem 6.2 in \cite{L}), this is equivalent to proving that
\begin{equation} \label{e:Liebgaussian}
\prod_{j=1}^m (\det A_j)^{p_j} \leq C \det (M+G)
\end{equation}
holds uniformly for such $\linearfamily$ and all positive definite $A_j:H_j\rightarrow H_j$, $1\leq j\leq m$, where $M=\sum_{j=1}^m p_j\linearmapj^*A_j\linearmapj$.
To this end we fix an auxiliary quantity
$$
\gamma =\min\Biggl\{\bigg(\frac{1-\alpha}{n}\bigg)^2,\bigg(\frac{c}{2 \max_{j} n_j(\|\linearmapj^0\|+\delta)^{n_j}}\bigg)^2\Biggr\},
$$
which, of course, only depends on the fixed datum $(\linearfamily^0,\mathbf{p})$, $\delta$ and our choice of $\alpha$.

Since $p_j>0$ and $\bigcap_{j=1}^m \ker{\linearmapj}=\{0\}$, we have that $M$, and thus $M+G$, is positive definite. Let $e_1,\hdots,e_n$ be an orthonormal basis of eigenvectors for $M+G$, ordered so that their corresponding eigenvalues satisfy $\mu_1 \geq \cdots \geq \mu_n > 0$. As in the proof of Theorem \ref{localisedthm}, we have that $\langle e_i, Me_i \rangle_{H} \leq \mu_i$ and $\langle A_j \linearmapj e_i, \linearmapj e_i \rangle_{H_j} \leq  \frac{\mu_i}{p_j}$ for each $1 \leq i \leq n$ and $1 \leq j \leq m$.

Suppose first that $\mu_n\geq \gamma$. This case is much the same as the fully localised case, except that the lower bound on the eigenvalues is $\gamma$ rather than $1$. Thus, by rescaling and using the argument in the localised case, it follows that
$$
\prod_{j=1}^m (\det A_j)^{p_j} \leq \gamma^{(\sum_{j=1}^m p_jn_j) - n}\bigg(c^{2\sum_{j=1}^mp_j}\prod_{j=1}^m p_j^{p_jn_j}\bigg)^{-1}\det (M+G).
$$

Otherwise $\mu_n<\gamma$. In this case, the argument combines the approaches from the localised  and the non-localised cases in \cite{BCCT1}. We define
$$
\ell := \min\{i\in\{1,\hdots,n\}: \mu_i<\gamma\}.
$$
Since $G$ is an orthogonal projection,
\begin{equation*}
|Ge_i|^2=\langle Ge_i,e_i\rangle\leq \langle (M+G)e_i,e_i\rangle\leq \mu_i\leq\gamma\leq \left(\frac{1-\alpha}{n}\right)^2
\end{equation*}
whenever $\ell\leq i\leq n$. Consequently, using the multilinearity of the wedge product,
\begin{equation}\label{firstuse}
\left|\bigwedge_{i=\ell}^n (e_i-Ge_i)-\bigwedge_{i=\ell}^n e_i\right|\leq (n-\ell+1)\max_{\ell\leq i\leq n}|Ge_i|\leq 1-\alpha,
\end{equation}
so that, by the triangle inequality and the orthonormality of the vectors $e_\ell,\hdots,e_n$,
$$
\left|\bigwedge_{i=\ell}^n (e_i-Ge_i)\right|\geq \alpha>0.
$$
Thus the vectors $$e_\ell-Ge_\ell,\hdots,e_n-Ge_n$$ are linearly independent in $H_0$, and in particular we have $\ell \geq n - \dim(H_0) + 1$.

Next, define $(v_1,\hdots,v_n)$ by $v_i:=e_i$ for $1\leq i \leq\ell - 1$ and by $v_i:=e_i-Ge_i \in H_0$ for $\ell\leq i\leq n$.  By definition,  $\|v_i\|\leq1$ for all $1\leq i \leq n$.  Arguing as in \eqref{firstuse},
$$
\bigg|  \bigwedge_{i=1}^n  v_i -  \bigwedge_{i=1}^n  e_i \bigg|  \leq (n-\ell + 1) \max_{1 \leq i \leq n} \|e_i-v_i\|\leq n\gamma^{1/2}\leq 1-\alpha,
$$
and therefore
$$
\bigg|  \bigwedge_{i=1}^n  v_i \bigg|  \geq 1- n\gamma^{1/2}\geq \alpha.
$$
Whence, $(v_1,\hdots,v_n)\in\mathcal{V}_{\alpha,\ell}$.

By Lemma \ref{new51} applied to $\mathbf{v}=(v_1,\hdots,v_n)$, there exists $I_j \subseteq \{1,\ldots,n\}$ for each $1 \leq j \leq m$  of cardinality $|I_j| = \dim(H_j)$
 such that  all three of \eqref{35b}, \eqref{35ker}, and \eqref{sizeb} hold for $\mathbf{v}$.

Now observe that
$$
\bigg|  \bigwedge_{i \in I_j}  \linearmapj v_i -  \bigwedge_{i \in I_j}  \linearmapj e_i \bigg|  \leq n_j \|L_j\|^{n_j} \max_{i\in I_j} \|e_i-v_i\| \leq n_j \|L_j\|^{n_j} \gamma^{1/2},
$$
and hence our choice of $\gamma$ guarantees that
$$
\bigg|  \bigwedge_{i \in I_j}  \linearmapj e_i\bigg| \geq c- \max_j n_j\|\linearmapj\|^{n_j}\gamma^{1/2}\geq c/2
$$
whenever $\|\linearmapj-\linearmapj^0\|\leq\delta$.
Here we are assuming that we have chosen norms so that $\|\linearmapj\|$ is bounded above by $\|\linearfamily\|$. This is of course quite natural, although since all choices of norms are equivalent, there is no loss of generality in doing this.
As before, we set $a_i := \sum_{j=1}^m p_j |I_j \cap \{i\}|$ and obtain
$$
\prod_{j=1}^m (\det A_j)^{p_j} \leq \Biggl((c/2)^{2\sum_{j=1}^mp_j}\prod_{j=1}^m p_j^{p_jn_j}\Biggr)^{-1} \prod_{i=1}^n \mu_i^{a_i}
$$
and a telescoping argument yields
$$ \prod_{i=1}^{\ell-1} \mu_i^{a_i}\leq \gamma^{(\sum_{i=1}^{\ell-1}a_i)-(\ell-1)} \prod_{i=1}^{\ell-1} \mu_i.$$

For the terms with $i \geq \ell$,
similarly to the global case in \cite{BCCT1}, we write
$$
\prod_{i=\ell}^n \mu_i^{a_i}=\mu_\ell^{a_{\geq \ell}}\prod_{i=\ell}^{n-1} \left(\frac{\mu_{i+1}}{\mu_i}\right)^{a_{\geq i+1}}.
$$
where $a_{\geq \ell} := \sum_{j=1}^m p_j |I_j \cap \{\ell,\ldots,n\}|$.


By \eqref{35ker}, for $\ell \leq i \leq n-1$ we have
$
a_{\geq i+1} \geq n-i
$
and as $\frac{\mu_{i+1}}{\mu_i}\leq 1$, this yields
$$
\prod_{i=\ell}^n \mu_i^{a_i}\leq \mu_\ell^{a_{\geq \ell}} \prod_{i=\ell}^{n-1} \left(\frac{\mu_{i+1}}{\mu_i}\right)^{n-i},
$$
which on reversing the telescoping gives,
$$
\prod_{i=\ell}^n \mu_i^{a_i}\leq \mu_\ell^{a_{\geq \ell}-(n-\ell+1)}\prod_{i=\ell}^n \mu_i.
$$
Recall that $\ell - 1\geq n-\dim(H_0)$, so that we may apply \eqref{35ker} to conclude that $a_{\geq \ell}\geq n-\ell+1$, and therefore
$
\prod_{i=\ell}^n \mu_i^{a_i}\leq \prod_{i=\ell}^n \mu_i.
$
Finally, we obtain
\begin{equation}\label{finaleq} \prod_{j=1}^m (\det A_j)^{p_j} \leq  \gamma^{(\sum_{i=1}^{\ell-1}a_i)-(\ell-1)}  \bigg((c/2)^{2\sum_{j=1}^mp_j}\prod_{j=1}^m p_j^{p_jn_j}\bigg)^{-1} \det (M+G)
\end{equation}
which concludes the proof.
\end{proof}
\section{The proofs of Theorems \ref{CBLapp}--\ref{NLBLapp}}\label{Sectionappl}
As we shall see in this section, the local boundedness of the Brascamp--Lieb constant established in Theorem \ref{mainthm} is a natural requirement for the induction-on-scales method to yield Theorems \ref{CBLapp}--\ref{NLBLapp}. Within harmonic analysis at least, the induction-on-scales arguments that we use go back to Bourgain \cite{Bo}, and have been used extensively since; see in particular \cite{BB}, \cite{BHT}, \cite{BCT}, \cite{B}, \cite{Guth2} for very similar arguments in the context of the Loomis--Whitney and multilinear Kakeya inequalities. In this Brascamp--Lieb setting, these inductive arguments are manifestations of a fundamental multi-scale inequality of Ball \cite{Ball}, and are closely related to heat-flow monotonicity and semigroup interpolation; see \cite{CLL}, \cite{BCCT1}, \cite{BCT}, \cite{BB} for further discussion of this perspective.

We warn that the function $\mathcal{C}$ will have a different definition in each of the sections \ref{CBLsect}--\ref{NLBLsect} below.

\subsection{Generalised multilinear Kakeya inequalities and Theorem \ref{CBLapp}}\label{CBLsect}
Here we prove Theorem \ref{CBLapp} using the induction-on-scales argument in Guth \cite{Guth2}. The role of Theorem \ref{mainthm} in this argument is to effectively change the order of the quantifiers in the hypothesis of Theorem \ref{CBLapp}. By suitably partitioning the families $\mathbb{T}_j$, $1\leq j\leq m$, and applying Theorem 1.1, it suffices to prove the following
weaker variant of Theorem \ref{CBLapp}. The deduction of Theorem \ref{CBLapp} in this way incurs a cost in the size of the constant $C$, but in a way which only depends on $\varepsilon$.
\begin{theorem}\label{CBLappprime}
Suppose $(\linearfamily,\mathbf{p})$ is a Brascamp--Lieb datum for which $\mbox{\emph{BL}}(\linearfamily,\mathbf{p})<\infty$, and $\varepsilon>0$. Then there exists $\nu=\nu(\varepsilon)>0$ and $C=C(\varepsilon)<\infty$ (both independent of $\delta$) such that
\begin{equation*}
\int_{[-1,1]^n}\prod_{j=1}^m\Bigl(\sum_{T_j\in\mathbb{T}_j}\chi_{T_j}\Bigr)^{p_j} \leq C\delta^{n-\varepsilon}
\prod_{j=1}^m\left(\#\mathbb{T}_j\right)^{p_j}
\end{equation*}
holds for all finite collections $\mathbb{T}_j$ of $\delta$-neighbourhoods of $n_j'$-dimensional affine subspaces of $\mathbb{R}^n$ which, modulo translations, are within a distance $\nu$ of the fixed subspace $V_j:=\ker L_j$.
\end{theorem}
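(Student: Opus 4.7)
I would prove Theorem \ref{CBLappprime} by induction on scales, in the spirit of Guth \cite{Guth2}, with Theorem \ref{mainthm} providing the uniform Brascamp--Lieb input. Fix $\varepsilon > 0$. By Theorem \ref{mainthm}, there exist $\nu_0 > 0$ and $\Lambda < \infty$ such that $\mbox{BL}(\linearfamily', \mathbf{p}) \leq \Lambda$ whenever $\|\linearfamily' - \linearfamily\| \leq \nu_0$. I take $\nu \in (0, \nu_0]$ (to be chosen small, depending on $\varepsilon$) and let $\mathcal{C}(\delta)$ denote the smallest constant in the inequality of Theorem \ref{CBLappprime} for admissible families of $\delta$-tubes with angular parameter $\nu$. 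The goal is $\mathcal{C}(\delta) \leq C_\varepsilon \delta^{-\varepsilon}$.

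The starting observation is that if every tube were \emph{exactly} parallel to $V_j$ (the idealised case $\nu = 0$), then $f_j := \sum_{T_j \in \mathbb{T}_j} \chi_{T_j}$ would factorise as $g_j \circ \linearmapj$ with $g_j := \sum_{T_j} \chi_{B(y_{T_j}, \delta)}$; applying Brascamp--Lieb directly and using the scaling identity $\sum_j p_j n_j = n$ produces $\mathcal{C}(\delta) \leq C\Lambda$ uniformly in $\delta$. The uniformity supplied by Theorem \ref{mainthm} is exactly what makes this bound robust, since at subsequent inductive stages the argument must be applied to rescaled (and hence slightly perturbed) linear data.

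For general $\nu > 0$, the inductive step partitions $[-1,1]^n$ into a grid of subcubes $Q$ of side length $s := \delta/\nu$. Over each such $Q$, any angular-$\nu$ tube deviates from its tangent affine slab by at most $\nu s = \delta$ in the transverse direction. Rescaling $Q$ to the unit cube, the rescaled tubes have thickness $\nu$ and, up to enlargement by an absolute constant, are contained in exact slabs perpendicular to $V_j^\perp$; applying the ``$\nu = 0$'' argument on the unit cube then yields
$$\int_Q \prod_j f_j^{p_j} \leq C \Lambda\, \delta^n \prod_j (\#\mathbb{T}_j(Q))^{p_j},$$
where $\mathbb{T}_j(Q)$ is the set of tubes in $\mathbb{T}_j$ meeting $Q$. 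Summing over $Q$ and controlling the resulting $\sum_Q \prod_j (\#\mathbb{T}_j(Q))^{p_j}$ by $\mathcal{C}(s) \prod_j (\#\mathbb{T}_j)^{p_j}$ --- via a standard fattening argument, in which each tube is replaced by its $s$-fattening and the discrete sum becomes the same Kakeya integral at scale $s$ --- gives the recursion
$$\mathcal{C}(\delta) \leq C \Lambda\, \mathcal{C}(\delta/\nu).$$
Iterating this $k = \lfloor \log(1/\delta)/\log(1/\nu) \rfloor$ times, until $\delta/\nu^k$ reaches a fixed base scale, produces $\mathcal{C}(\delta) \leq (C\Lambda)^k = \delta^{-\log(C\Lambda)/\log(1/\nu)}$, and choosing $\nu$ small enough that $\log(C\Lambda)/\log(1/\nu) \leq \varepsilon$ yields the result.

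The main obstacle is the linearisation on each subcube: the transverse deviation of an angular-$\nu$ tube over a cube of side $s$ is $O(\nu s)$, so matching this to the tube thickness $\delta$ forces the coupling $s \sim \delta/\nu$ used above, and one has to verify carefully that the rescaled linear data remain within the $\nu_0$-ball of $\linearfamily$ at every iteration. It is precisely here that Theorem \ref{mainthm} is indispensable: without uniformity of the Brascamp--Lieb constant across that neighbourhood, the recursion constant would depend on the scale and the argument would fail to yield a subpolynomial bound after $\log(1/\delta)$ iterations.
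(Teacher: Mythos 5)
Your argument is essentially the paper's own: the same decomposition of $[-1,1]^n$ into cubes of side $\delta/\nu$, replacement of each tube on a subcube by an $O(\delta)$-slab exactly parallel to $V_j$, an application of the Brascamp--Lieb inequality followed by fattening the tubes to scale $\delta/\nu$ to obtain the recursion $\mathcal{C}(\delta)\leq\kappa\,\mathcal{C}(\delta/\nu)$, and the choice $\varepsilon\log(1/\nu)=\log\kappa$ after iterating to scale $O(1)$. One small correction of emphasis: because the slabs on each subcube are made exactly parallel to $V_j$ and the recursion preserves both the angular parameter $\nu$ and the fixed datum, only the finiteness of $\mbox{BL}(\linearfamily,\mathbf{p})$ is actually used here; Theorem \ref{mainthm} is needed not inside this proof but in the prior reduction of Theorem \ref{CBLapp} (where $\nu$ is independent of $\varepsilon$) to Theorem \ref{CBLappprime}, via partitioning the families into subfamilies with angular spread $\nu(\varepsilon)$ about perturbed data.
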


For each $0<\delta,\nu\leq 1$, let $\mathcal{C}(\delta, \nu)$ denote the smallest constant $C$ in the inequality
\begin{equation}\label{CBLconst}
\int_{[-1,1]^n}\prod_{j=1}^m\Bigl(\sum_{T_j\in\mathbb{T}_j}\chi_{T_j}\Bigr)^{p_j}\leq C\delta^{n}\prod_{j=1}^m(\# \mathbb{T}_j)^{p_j}
\end{equation}
over all such families $\mathbb{T}_j$, $1\leq j\leq m$, as in the statement of Theorem \ref{CBLappprime}. We are required to show that given any $\varepsilon>0$, there exists $\nu=\nu(\varepsilon)>0$ (independent of $\delta$) such that $\mathcal{C}(\delta,\nu)\lesssim_\varepsilon\delta^{-\varepsilon}$.
\begin{proposition}\label{recursiveGuth}
There is a constant $\kappa<\infty$, independent of $\delta$ and $\nu$, such that $$\mathcal{C}(\delta,\nu)\leq\kappa\mathcal{C}(\delta/\nu, \nu).$$
\end{proposition}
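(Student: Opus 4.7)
The plan is the standard two-scale rescaling argument (as in Guth \cite{Guth2}), with the resulting combinatorial sum controlled by invoking the Brascamp--Lieb inequality itself at unit scale.

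First I would tile $[-1,1]^n$ by essentially disjoint cubes $Q$ of side length $\nu$, and for each $Q$ set $\mathbb{T}_j(Q):=\{T_j\in\mathbb{T}_j: T_j\cap Q\neq\emptyset\}$. The affine map sending $[-1,1]^n$ onto $Q$ is an isotropic dilation by $\nu$ followed by a translation; this preserves the Grassmannian distance to $V_j$ while rescaling tube widths from $\delta$ to $\delta/\nu$. Applying the definition of $\mathcal{C}(\delta/\nu,\nu)$ on each $Q$ and summing yields
$$
\int_{[-1,1]^n}\prod_j\Bigl(\sum_{T_j\in\mathbb{T}_j}\chi_{T_j}\Bigr)^{p_j}\leq \mathcal{C}(\delta/\nu,\nu)\,\delta^n \sum_Q\prod_j(\#\mathbb{T}_j(Q))^{p_j},
$$
so matters reduce to the combinatorial estimate
$$
\sum_Q\prod_j(\#\mathbb{T}_j(Q))^{p_j}\leq \kappa\prod_j(\#\mathbb{T}_j)^{p_j} \qquad (\star)
$$
with a constant $\kappa$ independent of $\delta$, $\nu$, and the collections $\mathbb{T}_j$.

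To prove $(\star)$ I would fatten each $T_j$ to its $C\nu$-neighbourhood $\tilde T_j$, choosing $C$ large enough that every $\nu$-cube meeting $T_j$ is contained in $\tilde T_j$. Then $\#\mathbb{T}_j(Q)\leq \sum_{T_j\in\mathbb{T}_j}\chi_{\tilde T_j}(x)$ for every $x\in Q$, so integrating over $Q$ and summing gives
$$
\nu^n\sum_Q\prod_j(\#\mathbb{T}_j(Q))^{p_j}\leq \int_{[-1,1]^n}\prod_j\Bigl(\sum_{T_j\in\mathbb{T}_j}\chi_{\tilde T_j}\Bigr)^{p_j}.
$$
Because the affine subspace underlying $T_j$ is within $\nu$ of $\ker L_j$ (modulo translations), a short geometric computation produces a ball $B_{T_j}\subset\mathbb{R}^{n_j}$ of radius $O(\nu)$ with $\chi_{\tilde T_j}\leq \chi_{B_{T_j}}\circ L_j$. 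A single application of the global inequality \eqref{BL}, which is finite by hypothesis and coincides with $\mbox{BL}_{\loc}(\linearfamily,\mathbf{p})$ under \eqref{scaling}, together with $|B_{T_j}|\lesssim \nu^{n_j}$ and the identity $\sum_j p_j n_j=n$, then bounds the right-hand side by $\kappa'\nu^n\prod_j(\#\mathbb{T}_j)^{p_j}$. This yields $(\star)$ and hence the proposition, with $\kappa$ depending only on $\mbox{BL}(\linearfamily,\mathbf{p})$ and universal geometric constants.

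The main obstacle is the combinatorial estimate $(\star)$. A pure counting bound loses too much (a single $T_j$ typically meets $\sim\nu^{-n_j'}$ cubes, and $\sum_j p_j n_j'=(\sum_j p_j-1)n$ need not vanish), so the sum must be recognised as a Riemann-type approximation of a Brascamp--Lieb integrand at scale $\nu$. Matching the tube fattening width to the allowed $\nu$ angular deviation is precisely what ensures the pointwise domination $\chi_{\tilde T_j}\leq \chi_{B_{T_j}}\circ L_j$ with universal constants, independent of the individual $T_j$.
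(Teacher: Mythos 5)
Your argument is correct, but it is the ``mirror image'' of the paper's decomposition rather than the same one. The paper tiles $[-1,1]^n$ by cubes of side $\delta/\nu$: on each such cube the angular deviation $\nu$ displaces a tube's core subspace by only $O(\delta)$, so the tubes may be replaced by slabs exactly parallel to $V_j$ and the \emph{exact} linear inequality \eqref{BL} is applied at the small scale; the inductive constant $\mathcal{C}(\delta/\nu,\nu)$ is then invoked at the unit scale via the fattened tubes $T_j+B(0,c\delta/\nu)$. You instead tile by cubes of side $\nu$, apply $\mathcal{C}(\delta/\nu,\nu)$ at the small scale after an isotropic rescaling, and reserve the single application of \eqref{BL} for the unit scale, where the $C\nu$-fattened tubes are dominated (on the unit cube) by pullbacks $\chi_{B_{T_j}}\circ L_j$ of $O(\nu)$-balls. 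Both routes exploit the same matching of angular deviation against length to produce exact Brascamp--Lieb data, and both yield $\kappa$ depending only on $\mathrm{BL}(\linearfamily,\mathbf{p})$ and dimensional constants via the scaling identity $\sum_j p_jn_j=n$; your observation that the combinatorial estimate $(\star)$ cannot be obtained by naive counting, and is really \eqref{BL} at resolution $\nu$, is exactly the right diagnosis. Two small points of hygiene: the domination $\chi_{\tilde T_j}\leq\chi_{B_{T_j}}\circ L_j$ holds only on a bounded set (globally $L_j(\tilde T_j)$ is unbounded since the tilted subspace need not lie in $\ker L_j$), so it should be stated as valid on $[-1,1]^n$, which is all you use; and when applying $\mathcal{C}(\delta/\nu,\nu)$ on a rescaled cube you should note that the rescaled truncated tubes are contained in full $(\delta/\nu)$-tubes, so monotonicity of the left-hand side applies. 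Neither affects the validity of the proof.
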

Iterating Proposition \ref{recursiveGuth} we obtain $\mathcal{C}(\delta,\nu)\leq\kappa^\ell\mathcal{C}(\delta/\nu^\ell,\nu)$ for each $\ell\in\mathbb{N}$. We choose $\ell$ such that $\delta/\nu^\ell\sim 1$, and $\nu$ such that $\varepsilon \log(1/\nu) = \log \kappa$, so that $\kappa^\ell \lesssim_\varepsilon \delta^{-\varepsilon}$ and hence $\mathcal{C}(\delta,\nu)\lesssim_\varepsilon\delta^{-\varepsilon}$, as required.

\begin{proof}[Proof of Proposition \ref{recursiveGuth}]
We begin by decomposing $[-1,1]^n$ into a grid of axis-parallel cubes $Q$ of sidelength $\delta/\nu$ and write
$$\int_{[-1,1]^n}\prod_{j=1}^m\Bigl(\sum_{T_j\in\mathbb{T}_j}\chi_{T_j}\Bigr)^{p_j}
=\sum_Q\int_Q\prod_{j=1}^m\Bigl(\sum_{T_j\in\mathbb{T}_j(Q)}\chi_{T_j\cap Q}\Bigr)^{p_j},
$$
where $\mathbb{T}_j(Q):=\{T_j\in\mathbb{T}_j:T_j\cap Q\not=\emptyset\}$. Observe that since $T_j$ is a $\delta$-neighbourhood of an affine $n_j'$-dimensional subspace of $\mathbb{R}^n$ which, modulo translations, is within a distance $\nu$ of $V_j:=\ker L_j$, there exists an $O(\delta)$-neighbourhood $T_j'$ with $T_j\cap Q\subseteq T_j'\cap Q$ and $T_j'$ parallel to $V_j$. Since $\mbox{BL}(\linearfamily,\mathbf{p})<\infty$,
\begin{eqnarray*}
\begin{aligned}
\int_Q\prod_{j=1}^m\Bigl(\sum_{T_j\in\mathbb{T}_j(Q)}\chi_{T_j\cap Q}\Bigr)^{p_j}&\leq\int_Q\prod_{j=1}^m\Bigl(\sum_{T_j\in\mathbb{T}_j(Q)}\chi_{T_j'}\Bigr)^{p_j}\\
&\lesssim\delta^n\prod_{j=1}^m(\#\mathbb{T}_j(Q))^{p_j}\\
&\leq\delta^n\prod_{j=1}^m\Bigl(\sum_{T_j\in\mathbb{T}_j}\chi_{\widetilde{T}_j}(x_Q)\Bigr)^{p_j}
\end{aligned}
\end{eqnarray*}
uniformly in $x_Q\in Q$. Here $\widetilde{T}_j=T_j+B(0,c\delta/\nu)$, with factor $c$ chosen large enough so that $T_j\cap Q\not=\emptyset\Rightarrow Q\subseteq\widetilde{T}_j$.
Averaging the above over $x_Q\in Q$ gives
$$
\int_Q\prod_{j=1}^m\Bigl(\sum_{T_j\in\mathbb{T}_j(Q)}\chi_{T_j\cap Q}\Bigr)^{p_j}\lesssim\nu^n\int_Q\prod_{j=1}^m\Bigl(\sum_{T_j\in\mathbb{T}_j}\chi_{\widetilde{T}_j}\Bigr)^{p_j},$$
which on summing in $Q$ results in
\begin{eqnarray*}
\begin{aligned}
\int_{[-1,1]^n}\prod_{j=1}^m\Bigl(\sum_{T_j\in\mathbb{T}_j(Q)}\chi_{T_j\cap Q}\Bigr)^{p_j}&\lesssim\nu^n\int_{[0,1]^n}\prod_{j=1}^m\Bigl(\sum_{T_j\in\mathbb{T}_j}\chi_{\widetilde{T}_j}\Bigr)^{p_j}\\
&\lesssim\delta^n\mathcal{C}(\delta/\nu,\nu)\prod_{j=1}^m(\#\mathbb{T}_j)^{p_j},
\end{aligned}
\end{eqnarray*}
from which the proposition follows.
\end{proof}

\subsection{Generalised multilinear restriction inequalities and Theorem \ref{OBLapp}}\label{OBLsect}
The deduction of Theorem \ref{OBLapp} from Theorem \ref{CBLapp} is a routine generalisation of the argument in \cite{BCT} (see also \cite{B}) in the setting of the Loomis--Whitney datum. We provide a sketch of the argument here for the sake of completeness.
\begin{proof}[Proof of Theorem \ref{OBLapp} from Theorem \ref{CBLapp}]
We begin with an observation.
For each $\varepsilon > 0$ and $R \geq 1$, applying Theorem \ref{CBLapp} with $\delta = R^{-1/2}$, and using rescaling and limiting arguments we obtain
\begin{equation} \label{e:Kakeyarescaled}
\int_{B(0,R)} \prod_{j=1}^m \bigg(\sum_{T_{j,R} \in \mathbb{T}_{j,R}} \frac{\chi_{T_j}}{|T_j|} * g_{T_j} \bigg)^{p_j} \lesssim_\varepsilon R^{\frac{\varepsilon}{2}-\sum_{j=1}^m p_jn_j'} \prod_{j=1}^m \bigg(\sum_{T_j \in \mathbb{T}_{j,R}} \|g_{T_j}\|_1\bigg)^{p_j}
\end{equation}
for all nonnegative $g_{T_j} \in L^1(\mathbb{R}^n)$, $T_j \in \mathbb{T}_{j,R}$, $1 \leq j \leq m$. Here $\mathbb{T}_{j,R}$ is any finite collection of rectangles in $\mathbb{R}^n$ with $n_j$ sides of length $O(R^{1/2})$ and $n_j'$ sides of length $O(R)$, with the property that each $T_j\in\mathbb{T}_{j,R}$ is contained in an $O(R^{1/2})$-neighbourhood of an $n_j'$-dimensional subspaces of $\mathbb{R}^n$ which is (modulo translations) within a distance $\nu > 0$ (given by Theorem \ref{CBLapp}) of $\ker L_j$.

In order to prove Theorem \ref{OBLapp} it will suffice to show that
\begin{equation}\label{wantrest}
\int_{B(0,R)} \prod_{j=1}^m |G_j|^{2p_j} \lesssim_\varepsilon R^{\varepsilon - \sum_{j=1}^mp_jn_j'} \prod_{j=1}^m \|G_j\|_{2}^{2p_j}
\end{equation}
for all $G_j \in L^2(\mathbb{R}^n)$ such that $\mathrm{supp} \,\widehat{G_j} \subseteq S_j + O(R^{-1})$, $1 \leq j \leq m$, and all $R \geq 1$. To see that \eqref{wantrest} implies \eqref{genmultrestloss} we first observe that $E_jg_j=\widehat{h_jd\sigma_j}$, where the $S_j$-carried measure $\sigma_j$ is defined by
$$
\int_{\mathbb{R}^n}\psi d\sigma_j:=\int_{U_j}\psi(\Sigma_j(x))dx
$$
and $h_j$ by $g_j=h_j\circ\Sigma_j$. Let $\phi$ be a smooth bump function supported in $B(0,1)$ with Fourier transform bounded below on $B(0,1)$, and let $\phi_R(x)=R^n\phi(Rx)$. Setting $G_j=(E_jg_j)\widehat{\phi}_R$ reveals that
$\mathrm{supp} \,\widehat{G_j} \subseteq S_j + O(R^{-1})$ and $\|G_j\|_2\sim R^{n_j'/2}\|h_j\|_2\sim R^{n_j'/2}\|g_j\|_2$ uniformly in $R$ for each $1\leq j\leq m$. Applying \eqref{wantrest} to these functions $G_j$ establishes \eqref{genmultrestloss}; see \cite{TVV} for further details of this reduction in a bilinear setting.

Next we let $\mathcal{C}(R)$ denote the smallest constant $C$ in the inequality
\begin{equation}\label{wantrest1}
\int_{B(0,R)} \prod_{j=1}^m |G_j|^{2p_j} \leq C R^{- \sum_{j=1}^mp_jn_j'} \prod_{j=1}^m \|G_j\|_{2}^{2p_j}
\end{equation}
over all $G_j \in L^2(\mathbb{R}^n)$ such that $\mathrm{supp} \,\widehat{G_j} \subseteq S_j + O(R^{-1})$, $1 \leq j \leq m$. In these terms \eqref{wantrest} becomes $\mathcal{C}(R)\lesssim_\varepsilon R^\varepsilon$.

Upon iterating and using the elementary fact that $\mathcal{C}(100)<\infty$, it will
be enough to prove that for each $\varepsilon>0$, there exists a constant $c_\varepsilon$, independent of $R$, such that
\begin{equation}\label{restrec}\mathcal{C}(R)\leq c_\varepsilon R^\varepsilon\mathcal{C}(R^{1/2})
\end{equation}
for all $R\geq 1$.

Let $x\in B(0,R)$ and $\phi_{R^{1/2}}^x:\mathbb{R}^n\rightarrow\mathbb{R}$ be given by $\phi_{R^{1/2}}^x(y)=e^{-2\pi ix \cdot y}\phi_{R^{1/2}}(y)$, where $\phi_{R^{1/2}}$ is defined above; observe that the Fourier transform of $\phi^x_{R^{1/2}}$ is bounded below on $B(x,R^{1/2})$ uniformly in $x$ and $R$.
Applying \eqref{wantrest1} on $B(x,R^{1/2})$, using the modulation-invariance of the inequality, we obtain
$$
\int_{B(x,R^{1/2})}\prod_{j=1}^m |G_j|^{2p_j} \lesssim\mathcal{C}(R^{1/2})R^{-\frac{1}{2}\sum_{j=1}^m p_jn_j'}
\prod_{j=1}^m \|\widehat{G}_j*\phi_{R^{1/2}}^x\|_2^{2p_j}
$$
uniformly in $x$ and $R$. Averaging this over all $|x|\leq R$  yields
$$
\int_{B(0,R)}\prod_{j=1}^m |G_j|^{2p_j} \lesssim\mathcal{C}(R^{1/2})R^{-\frac{n}{2}-\frac{1}{2}\sum_{j=1}^m p_jn_j'}
 \int_{B(0,R)}\prod_{j=1}^m \|\widehat{G}_j*\phi_{R^{1/2}}^x\|_2^{2p_j}dx.
$$
Defining $\widehat{G^{\rho_j}_j} = \widehat{G_j}\chi_{\rho_j}$ for caps $\rho_j$ with diameter $R^{-1/2}$ which together provide a cover of $S_j + O(R^{-1})$ with bounded overlap, we may write
\begin{equation*}
\int_{B(0,R)} \prod_{j=1}^m |G_j|^{2p_j} \lesssim\mathcal{C}(R^{1/2})R^{-\frac{n}{2}-\frac{1}{2}\sum_{j=1}^m p_jn_j'}
 \int_{B(0,R)} \prod_{j=1}^m \bigg( \sum_{\rho_j} \|\widehat{G^{\rho_j}_j}*\phi_{R^{1/2}}^x \|_2^2 \bigg)^{p_j} dx.
\end{equation*}
Using the rapid decay of the function $\phi_{R^{1/2}}^x$ it now suffices to show that
\begin{equation}\label{laststep}
\int_{B(0,R)} \prod_{j=1}^m \bigg( \sum_{\rho_j} \|G^{\rho_j}_j \|_{L^2(B(x,R^{1/2}))}^2 \bigg)^{p_j} dx\lesssim R^{\varepsilon+\frac{n}{2}-\frac{1}{2}\sum_{j=1}^m p_jn_j'}\prod_{j=1}^m \|G_j\|_{2}^{2p_j}.
\end{equation}
Now let $\widetilde{G}^{\rho_j}_j$ be given by $G_j^{\rho_j} = \widetilde{G}^{\rho_j}_j * \widehat{\psi}_{\rho_j}$, where $\psi_{\rho_j}$ is a Schwartz function which satisfies $\psi_{\rho_j} \sim 1$ on $\rho_j$ and
\begin{equation*}
|\widehat{\psi}_{\rho_j}(x + y)| \lesssim \frac{\chi_{\rho^*_j}(x)}{|\rho^*_j|}
\end{equation*}
uniformly in $x \in \mathbb{R}^n, y \in B(0,R^{1/2})$.
Here $\rho^*_j$ is a rectangle in $\mathbb{R}^n$ with $n_j$ sides of length $O(R^{1/2})$ and $n_j'$ sides of length $O(R)$, lying in an $O(R^{1/2})$-neighbourhood of an $n_j'$-dimensional affine subspace of $\mathbb{R}^n$, which, if the neighbourhoods $U_j$ are chosen sufficiently small, is within distance $\nu$ of $\ker L_j$ (modulo translations). Applying the Cauchy--Schwarz inequality and integrating in $y\in B(0,R^{1/2})$, we have
\begin{equation*}
\|G^{\rho_j}_j \|_{L^2(B(x,R^{1/2}))}^2 \lesssim R^{n/2} \frac{\chi_{\rho^*_j}}{|\rho^*_j|} * |\widetilde{G}^{\rho_j}_j|^2 (x)
\end{equation*}
uniformly in $\rho_j$, $x$ and $R$. An application of
\eqref{e:Kakeyarescaled}, the scaling condition \eqref{scaling}, followed by the bounded overlap property of the caps $\rho_j$, completes the proof of \eqref{laststep}, and hence \eqref{restrec}.
\end{proof}
It is interesting to note that in the particular case of the Loomis--Whitney datum, one can recover Theorem \ref{CBLapp} from Theorem \ref{OBLapp} by a simple Rademacher function argument (see \cite{BCT}). However, when the codimensions $n_j'\not=1$ this argument fails due to certain orientation restrictions on the dual objects $\rho_j^*$ arising in the above wave-packet analysis.

\subsection{Nonlinear Brascamp--Lieb inequalities and Theorem \ref{NLBLapp}}\label{NLBLsect}
The induction-on-scales argument we use here is very closely related to the one used in Section \ref{CBLsect}.
We begin by introducing, for each $0<\delta\leq 1$ and $n\in\mathbb{N}$, the class of functions
\begin{equation}\label{harnack}
L^1(\mathbb{R}^n;\delta):=\{f\in L^1(\mathbb{R}^n): f\geq 0\mbox{ and } \tfrac{1}{2}f(y)\leq f(x)\leq 2 f(y) \mbox{ whenever } |x-y|\leq\delta\}.
\end{equation}
As remarked in \cite{BB}, $u(c\delta,\cdot):=P_{c\delta}*\mu\in L^1(\mathbb{R}^n;\delta)$ for every finite Borel measure $\mu$ on $\mathbb{R}^n$, where $P_t$ denotes the Poisson kernel and $c$ a suitable dimensional constant. The defining property of a function $f\in L^1(\mathbb{R}^n;\delta)$ states that $f$ is ``essentially constant at scale $\delta$", and in the context of the harmonic function, $u$, is a manifestation of the Harnack principle.
\begin{proposition}\label{NLBLenough}
Under the hypotheses of Theorem \ref{NLBLapp}, there exists a neighbourhood $U$ of the origin in $\mathbb{R}^n$ and a constant $\kappa<\infty$ such that
\begin{equation}
\int_{U}\prod_{j=1}^m (f_j\circ \nonlinearmapj)^{p_j}\lesssim \left(\log\left(\frac{1}{\delta}\right)\right)^\kappa\prod_{j=1}^m
\left(\int_{\mathbb{R}^{n_j}}f_j\right)^{p_j}
\end{equation}
for all functions $f_j\in L^1(\mathbb{R}^{n_j};\delta)$, $1\leq j\leq m$, and all $0<\delta\leq 1$.
\end{proposition}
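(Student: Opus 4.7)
The plan is to prove Proposition \ref{NLBLenough} by an induction-on-scales argument structurally akin to Proposition \ref{recursiveGuth}. Let $\mathcal{C}(\delta)$ denote the best constant for which
\[
\int_{U} \prod_{j=1}^m (f_j \circ B_j)^{p_j} \leq \mathcal{C}(\delta) \prod_{j=1}^m \Bigl(\int_{\mathbb{R}^{n_j}} f_j\Bigr)^{p_j}
\]
holds for every $f_j \in L^1(\mathbb{R}^{n_j};\delta)$, where $U$ is a fixed neighbourhood of the origin small enough that, by Theorem \ref{localisedthm}, the local Brascamp--Lieb constant $\mathrm{BL}_{\mathrm{loc}}(\mathrm{d}\mathbf{B}(x),\mathbf{p})$ remains uniformly bounded by some $C_0$ on $U$. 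I aim to establish a multiplicative recursion $\mathcal{C}(\delta) \leq C_1 \mathcal{C}(\sqrt{\delta})$ with $C_1$ independent of $\delta$; iterating this $k \sim \log_2\log(1/\delta)$ times reduces to the trivial base case $\mathcal{C}(1) < \infty$ (which follows from $\|f_j\|_\infty \lesssim \|f_j\|_1$ for $f_j \in L^1(\mathbb{R}^{n_j};1)$), yielding $\mathcal{C}(\delta) \lesssim C_1^{\log_2\log(1/\delta)} = (\log(1/\delta))^{\kappa}$ with $\kappa = \log_2 C_1$.

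To prove the recursive inequality, I partition $U$ into a grid of axis-parallel cubes $Q$ of sidelength $\rho := \sqrt{\delta}$. On each cube centred at $x_Q$, smoothness of $B_j$ yields the Taylor expansion $B_j(x) = B_j(x_Q) + L_{j,x_Q}(x - x_Q) + O(\rho^2)$ with error $O(\delta)$, which the Harnack-type property \eqref{harnack} absorbs into a multiplicative constant, giving $f_j(B_j(x)) \lesssim f_j(B_j(x_Q) + L_{j,x_Q}(x - x_Q))$ on $Q$. After rescaling $x = x_Q + \rho y$ and setting $\tilde f_j(w) := f_j(B_j(x_Q) + \rho w)$, the contribution of $Q$ becomes $\rho^n \int_{[-1/2,1/2]^n} \prod_j \tilde f_j(L_{j,x_Q} y)^{p_j}\,dy$, and I apply the linear localised Brascamp--Lieb inequality with uniform constant $C_0$. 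Using that only values of $\tilde f_j$ on a bounded region enter (so $\int \tilde f_j$ may be replaced by $\rho^{-n_j}\int_{B(B_j(x_Q), C\rho)} f_j$), together with the scaling identity $\sum p_j n_j = n$ (forced by $\mathrm{BL}(\mathbf{L},\mathbf{p}) < \infty$) which makes the prefactor $\rho^{n - \sum p_j n_j}$ equal to $1$, this gives
\[
\int_Q \prod_j (f_j \circ B_j)^{p_j} \lesssim C_0 \prod_j \Bigl(\int_{B(B_j(x_Q), C\rho)} f_j\Bigr)^{p_j}.
\]

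It remains to absorb $\sum_Q \prod_j\bigl(\int_{B(B_j(x_Q),C\rho)} f_j\bigr)^{p_j}$ into $\mathcal{C}(\rho)\prod_j(\int f_j)^{p_j}$, and this is where the nonlinear inductive hypothesis enters. I introduce the Poisson extensions $g_j := P_{c\rho} * f_j$: by the Harnack principle for positive harmonic functions these lie in $L^1(\mathbb{R}^{n_j};\rho)$, satisfy $\int g_j = \int f_j$, and dominate averages as $\int_{B(y,C\rho)} f_j \lesssim \rho^{n_j} g_j(y)$. Since each $g_j\circ B_j$ varies by a bounded factor across $Q$, the Riemann sum $\rho^n\sum_Q \prod_j g_j(B_j(x_Q))^{p_j}$ is comparable to $\int_U \prod_j (g_j\circ B_j)^{p_j}$; combining these estimates with the scaling identity and then invoking the proposition inductively at scale $\rho$ on the $g_j \in L^1(\mathbb{R}^{n_j};\rho)$ closes the recursion.

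The main obstacle is the coordination of the two uses of inductive structure---the linear localised Brascamp--Lieb inequality on each cube, made uniform by Theorem \ref{localisedthm}, and a self-referential application of the nonlinear estimate at the coarser scale $\rho$ to the auxiliary Poisson-regularised functions $g_j$---while exploiting the scaling condition to prevent the catastrophic $\rho^{-n}$ loss that a naive accumulation of cube contributions would incur. For the $C^{1,\beta}$ refinement mentioned in the paper one takes $\rho = \delta^{1/(1+\beta)}$ so the linearisation error remains $O(\delta)$, yielding the iteration $\mathcal{C}(\delta) \leq C_1\mathcal{C}(\delta^{\beta/(1+\beta)})$ with an analogous logarithmic conclusion.
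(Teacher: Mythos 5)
Your proposal is correct and follows essentially the same induction-on-scales argument as the paper: partition into $\sqrt{\delta}$-cubes, linearise $B_j$ via Taylor expansion and absorb the $O(\delta)$ error with the Harnack-type property, apply the uniformly bounded Brascamp--Lieb constant from the stability theorem on each cube, dominate the resulting averages by Poisson extensions $P_{c\sqrt{\delta}}*f_j\in L^1(\mathbb{R}^{n_j};\sqrt{\delta})$, and close the recursion $\mathcal{C}(\delta)\lesssim\mathcal{C}(\sqrt{\delta})$ after $O(\log\log(1/\delta))$ iterations. The only cosmetic difference is your invocation of the localised constant from Theorem \ref{localisedthm} where the paper uses Theorem \ref{mainthm} directly, which is immaterial since the scaling condition forces the two to coincide.
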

Before proving Proposition \ref{NLBLenough} we indicate how it implies Theorem \ref{NLBLapp}.
We begin with a simple observation. For each $1\leq j\leq m$ let $\psi_j$ be a Schwartz function on $\mathbb{R}^{n_j}$, and for each $\delta_1,\hdots,\delta_m\geq\delta>0$, let $\psi_{j,\delta_j}(x):=\delta_j^{-n_j}\psi_j(\delta_j^{-1}x)$. Bounding $|\psi_j|$ by a suitably normalised Poisson kernel, as we may, it follows that for each nonnegative $g_j\in L^1(\mathbb{R}^{n_j})$ there is a $\widetilde{g}_j\in L^1(\mathbb{R}^{n_j};\delta)$ such that
$|\psi_{j,\delta_j}|*g_j\lesssim \widetilde{g}_j$ and $\int\widetilde{g_j}\lesssim\int g_j$, with implicit constants uniform in $\delta_1,\hdots,\delta_m$ and $\delta$. Thus by Proposition \ref{NLBLenough},
\begin{equation}\label{touse}
\int_U\prod_{j=1}^m\left((|\psi_{j,\delta_j}|*g_j)\circ B_j\right)^{p_j}\lesssim \left(\log\left(\frac{1}{\delta}\right)\right)^\kappa\prod_{j=1}^m
\left(\int_{\mathbb{R}^{n_j}}g_j\right)^{p_j}
\end{equation}
for all nonnegative $g_j\in L^1(\mathbb{R}^{n_j})$, $1\leq j\leq m$, and $\delta_1,\hdots,\delta_m\geq\delta>0$.

Let $\varepsilon>0$. For each $1\leq j\leq m$ let $\{P_{j,k}\}_{k=0}^\infty$ be the standard annular Littlewood--Paley projection operators on $\mathbb{R}^{n_j}$ with associated convolution kernels $\{\phi_{j,k}\}_{k=0}^\infty$. We choose these kernels such that for $k>0$, $\widehat{\phi}_{j,k}(\xi)=\widehat{\phi}_j(2^{-k}\xi)$ for some fixed Schwartz function $\phi_j$ on $\mathbb{R}^{n_j}$ with Fourier support in the annulus $\{\xi\in\mathbb{R}^{n_j}:1/4\leq |\xi|\leq 2\}$, and such that $\phi_{j,0}$ is a Schwartz function with Fourier support in the unit ball of $\mathbb{R}^{n_j}$. Furthermore the functions $\{\widehat{\phi}_{j,k}\}_{k=0}^\infty$ are taken to form a partition of unity on $\mathbb{R}^{n_j}\backslash\{0\}$, so that $\sum_{k \geq 0}P_{j,k}$ is the identity for each $j$.
For each $1\leq j\leq m$ let $\widetilde{\phi}_{j}$ be a Schwartz function whose Fourier transform is equal to $1$ on the Fourier support of $\phi_j$, and define $\widetilde{\phi}_{j,k}$ in a similar way to $\phi_{j,k}$. Observe that $\widetilde{\phi}_{j,k}*\phi_{j,k}=\phi_{j,k}$ for all $j,k$.

We may thus write
$$
\int_U\prod_{j=1}^m f_j\circ B_j=\sum_{k_1,\hdots, k_m \geq 0}\int_U\prod_{j=1}^m(P_{j,k_j}f_j)\circ B_j.
$$
By symmetry we need only consider the above sum for $k_1\geq k_2\geq\cdots\geq k_m\geq 0$. Writing $P_{j,k_j}f_j=\widetilde{\phi}_{j,k_j}*(P_{j,k_j}f_j)$ and applying H\"older's inequality we have
\begin{eqnarray*}
\begin{aligned}
\int_U\prod_{j=1}^m|P_{j,k_j}f_j|\circ B_j&\leq\int_U\left(|\widetilde{\phi}_{j,k_j}|*|P_{j,k_j}f_j|\right)\circ B_j\\
&\lesssim\int_U\prod_{j=1}^m\left(|\widetilde{\phi}_{j,k_j}|*|P_{j,k_j}f_j|^{q_j}\right)^{p_j}\circ B_j,
\end{aligned}
\end{eqnarray*}
which by \eqref{touse} is, up to a bounded factor, bounded above by
$$
2^{\varepsilon k_1/2}\prod_{j=1}^m\|P_{j,k_j}f_j\|_{q_j}.
$$
Thus
\begin{eqnarray*}
\begin{aligned}
\sum_{k_1\geq\cdots\geq k_m \geq 0}\int_U\prod_{j=1}^m|P_{j,k_j}|\circ B_j&\lesssim \sum_{k_1\geq\cdots\geq k_m \geq 0}2^{\varepsilon k_1/2}\prod_{j=1}^m\|P_{j,k_j}f_j\|_{q_j}\\
& \lesssim \sum_{k_1\geq\cdots\geq k_m \geq 0} 2^{-\varepsilon k_1/2} \|f_1\|_{L^{q_1}_\varepsilon} \prod_{j=2}^m \|f_j\|_{q_j} \\
&\lesssim \sum_{k_1\geq\cdots\geq k_m \geq 0}2^{-\varepsilon k_1/(2m)}\cdots2^{-\varepsilon k_m/(2m)}\|f_1\|_{L^{q_1}_\varepsilon(\mathbb{R}^{d_1})}\prod_{j=2}^m\|f_j\|_{q_j}\\
&\lesssim\prod_{j=1}^m\|f_j\|_{L^{q_j}_\varepsilon(\mathbb{R}^{n_j})},
\end{aligned}
\end{eqnarray*}
as required.

\begin{proof}[Proof of Proposition \ref{NLBLenough}]
Let $\eta$ be a positive real number to be determined. For $\delta>0$ let $\mathcal{C}(\delta)$ denote the best constant $C$ in the inequality
\begin{equation}
\int_{[-\eta,\eta]^n}\prod_{j=1}^m (f_j\circ \nonlinearmapj)^{p_j}\leq C\prod_{j=1}^m
\left(\int_{\mathbb{R}^{n_j}}f_j\right)^{p_j}
\end{equation}
over all functions $f_j\in L^1(\mathbb{R}^{n_j};\delta)$, $1\leq j\leq m$.
Of course Proposition \ref{NLBLenough} states that for some choice of $\eta$, depending only on the nonlinear maps $B_1,\hdots, B_m$ and exponents $p_1,\hdots, p_m$, there is a $\kappa<\infty$ for which $\mathcal{C}(\delta)\lesssim\left(\log(1/\delta)\right)^\kappa$. This will follow upon iterating $O(\log\log(1/\delta))$ times the recursive inequality
\begin{equation}\label{NLBLrecursive}
\mathcal{C}(\delta)\lesssim\mathcal{C}(\sqrt{\delta}).
\end{equation}
There will be more than one constraint placed on $\eta$, although the most significant will be a consequence of the local boundedness of the classical Brascamp--Lieb constant, established in Theorem \ref{mainthm}. Since the $\nonlinearmapj$ are smooth in a neighbourhood of the origin, and $\mathrm{d}\nonlinearmapj(0)=\linearmapj $, we have that $\|\mathrm{d}\nonlinearmapj(x)-\linearmapj \|\lesssim |x|$ in this neighbourhood. Thus, by Theorem \ref{mainthm}, there exists $\eta_0>0$ such that $\mbox{BL}((\mathrm{d}\nonlinearmapj(x))_{j=1}^m, \mathbf{p})<\infty$ uniformly in $|x|\leq\eta_0$.

In order to prove \eqref{NLBLrecursive}, we first partition $[-\eta,\eta]^n$ into a disjoint union of axis-parallel cubes $Q$ of sidelength
$\sqrt{\delta}$, and write
\begin{equation*}
\int_{[-\eta,\eta]^n}\prod_{j=1}^m (f_j\circ \nonlinearmapj)^{p_j}=\sum_Q\int_Q\prod_{j=1}^m ((f_j\chi_{B_j(Q)})\circ \nonlinearmapj)^{p_j}.
\end{equation*}
Taylor expanding $\nonlinearmapj$ about $x_Q\in Q$ we obtain
$$
\nonlinearmapj(x)=\nonlinearmapj(x_Q)+\mathrm{d}\nonlinearmapj(x_Q)(x-x_Q)+O(|x-x_Q|^2),
$$
and so if $x\in Q$,
$$
\nonlinearmapj(x)-(\nonlinearmapj(x_Q)+\mathrm{d}\nonlinearmapj(x_Q)(x-x_Q))=O(\delta).
$$
Since $f_j\in L^1(\mathbb{R}^{n_j};\delta)$ we have that
$$
f_j(\nonlinearmapj(x))\lesssim f_j(\nonlinearmapj(x_Q)+\mathrm{d}\nonlinearmapj(x_Q)(x-x_Q)),
$$
uniformly in $x\in Q$ and $Q\subseteq [-\eta,\eta]^n$. By translation-invariance we have
\begin{eqnarray*}
\begin{aligned}
\int_Q\prod_{j=1}^m ((f_j\chi_{B_j(Q)})\circ \nonlinearmapj)^{p_j} 
\lesssim \mbox{BL}((\mathrm{d}\nonlinearmapj(x_Q))_{j=1}^m,\mathbf{p})\prod_{j=1}^m\left(\int_{\nonlinearmapj(\widetilde{Q})}f_j\right)^{p_j}
\end{aligned}
\end{eqnarray*}
for all $Q$, where $\widetilde{Q}$ is a suitable centred dilate of $Q$. Choosing $\eta\leq\eta_0$ we obtain
\begin{eqnarray}
\begin{aligned}
\int_{[-\eta,\eta]^n}\prod_{j=1}^m (f_j\circ \nonlinearmapj)^{p_j}\lesssim\sum_Q\prod_{j=1}^m\left(\int_{\nonlinearmapj(\widetilde{Q})}f_j\right)^{p_j}.
\end{aligned}
\end{eqnarray}
Since $\mathrm{d}\nonlinearmapj(0)=\linearmapj :\mathbb{R}^n\rightarrow\mathbb{R}^{n_j}$ is a surjection, and $\nonlinearmapj$ is smooth in a neighbourhood of the origin, we have that (making $\eta>0$ smaller if necessary), $|\nonlinearmapj(\widetilde{Q})|\sim\delta^{n_j/2}$ and
$$
\frac{1}{|\nonlinearmapj(\widetilde{Q})|}\int_{\nonlinearmapj(\widetilde{Q})}f_j\lesssim P_{c\sqrt{\delta}}*f_j(\nonlinearmapj(x_Q))
$$
uniformly in $x_Q\in Q$ and $Q\subseteq [-\eta,\eta]^n$. Here, as before, $P_t$ denotes the Poisson kernel on $\mathbb{R}^{n_j}$ (the dimension dictated by context), and $c$ a dimensional constant to be chosen momentarily. Thus
$$
\int_{[-\eta,\eta]^n}\prod_{j=1}^m (f_j(\nonlinearmapj(x)))^{p_j}dx\lesssim\delta^{\frac{1}{2}(p_1n_1+\cdots+p_mn_m)}\sum_Q\prod_{j=1}^m(P_{c\sqrt{\delta}}*f_j(\nonlinearmapj(x_Q)))^{p_j}.
$$
Averaging in the choices of $x_Q\in Q$, and using the scaling condition $\sum_{j=1}^m p_jn_j=n$, yields
\begin{eqnarray*}
\begin{aligned}
\int_{[-\eta,\eta]^n}\prod_{j=1}^m (f_j(\nonlinearmapj(x)))^{p_j}dx&\lesssim\sum_Q\int_Q\prod_{j=1}^m(P_{c\sqrt{\delta}}*f_j(\nonlinearmapj(x)))^{p_j}dx\\
&\lesssim\int_{[-\eta,\eta]^n}\prod_{j=1}^m(P_{c\sqrt{\delta}}*f_j(\nonlinearmapj(x)))^{p_j}dx.
\end{aligned}
\end{eqnarray*}
Choosing $c>0$ appropriately ensures that $P_{c\sqrt{\delta}}*f_j\in L^1(\mathbb{R}^{n_j};\sqrt{\delta})$ for each $j$. The claimed inequality \eqref{NLBLrecursive} follows.
\end{proof}


\end{document}